
\documentclass{siamart0216}

\usepackage{amsmath}
\usepackage{amsmath, hyperref, color}
\usepackage{graphicx}
\usepackage{caption}
\usepackage{subcaption}

\usepackage[shortlabels]{enumitem}
\usepackage{graphicx}
\usepackage[all]{xypic}
\usepackage{makecell}

\usepackage{lipsum}


\usepackage{lipsum}
\usepackage{amsfonts}
\usepackage{graphicx}
\usepackage{epstopdf}
\usepackage{algorithmic}
\ifpdf
  \DeclareGraphicsExtensions{.eps,.pdf,.png,.jpg}
\else
  \DeclareGraphicsExtensions{.eps}
\fi

\newcommand{\TheTitle}{Convexity in Tree Spaces} 
\newcommand{\TheAuthors}{Bo Lin, Bernd Sturmfels, Xiaoxian Tang and Ruriko Yoshida}

\headers{\TheTitle}{\TheAuthors}

\title{{\TheTitle}}

\author{
 Bo Lin\thanks{Department of Mathematics, University of California, Berkeley
    (\email{linbo@berkeley.edu}).}
  \and
  Bernd Sturmfels\thanks{Department of Mathematics, University of
    California, Berkeley (\email{bernd@berkeley.edu}).}
\and
Xiaoxian Tang\thanks{Department of Mathematics, Universit\"at Bremen (\email{xtang@uni-bremen.de}).}
  \and
  Ruriko Yoshida\thanks{Department of Statistics, 
    University of Kentucky, (\email{ruriko.yoshida@uky.edu}).}
}

\usepackage{amsopn}


\ifpdf
\hypersetup{
  pdftitle={\TheTitle},
  pdfauthor={\TheAuthors}
}
\fi


\externaldocument{ex_supplement}



\newtheorem{problem}[theorem]{Problem}

\newtheorem{example}[theorem]{Example}

\newtheorem{algorithm2}[theorem]{Algorithm}
\newtheorem{experiment}[theorem]{Experiment}

\newcommand{\RR}{\mathbb{R}}


\newcommand{\para}{{\Vert}}
\DeclareMathOperator*{\argmin}{arg\,min}

\begin{document}

\maketitle

\begin{abstract}
We study the geometry of metrics and convexity structures on the space of 
phylogenetic trees, which is here realized as the tropical linear space of all ultrametrics.
The ${\rm CAT}(0)$-metric of Billera-Holmes-Vogtman arises
from the theory of orthant spaces. 
While its geodesics can be computed by
the Owen-Provan algorithm, geodesic triangles are complicated.
We show that the dimension of such a triangle can be arbitrarily high.
Tropical convexity and the tropical metric behave better.
They exhibit properties  desirable for geometric statistics,
such as geodesics of small depth.
\end{abstract}

\begin{keywords}
 Billera-Holmes-Vogtman metric, ${\rm CAT}(0)$ metric space, 
 geodesic triangle,  phylogenetic tree, polytope,  tropical convexity.
\end{keywords}

\begin{AMS}
05C05, 30F45,  52B40,  68U05, 92D15
\end{AMS}

\section{Introduction}
\label{sec1}

A finite metric space with $m$ elements is represented by a 
nonnegative symmetric $m \times m$-matrix $D = (d_{ij})$
with zero entries on the diagonal such that all 
   triangle inequalities are satisfied:
$$ d_{ik} \leq d_{ij} + d_{jk} 
\quad \hbox{for all} \,\,\, i,j,k \,\,\, \hbox{in} \,\,\, [m] :=  \{1,2,\ldots,m\} . $$
The set of all such metrics is a full-dimensional
closed polyhedral cone, known as the {\em metric cone},
 in the space $\RR^{\binom{m}{2}}$ of symmetric matrices with vanishing diagonal.
For many applications one requires the following
strengthening of the triangle inequalities:
 \begin{equation}
 \label{eq:ultrametric} \quad
d_{ik} \leq  {\rm max}(d_{ij} , d_{jk} )
\quad \hbox{for all} \,\,\, i,j,k \in [m]. 
\end{equation}
If \eqref{eq:ultrametric} holds then the metric space $D$ is called an {\em ultrametric}.
The set of all  ultrametrics contains the
ray $ \mathbb{R}_{\geq 0} {\bf 1}$ spanned by the all-one metric ${\bf 1}$,
which is defined by $d_{ij} = 1$ for $1 \leq i < j \leq m$.
The image of the set of ultrametrics in the quotient 
space $\mathbb{R}^{\binom{m}{2}} \!/ \mathbb{R} {\bf 1}$
is denoted $\mathcal{U}_m$ and called the {\em space of ultrametrics}.
It is known in tropical  geometry \cite{AK, MS}  and in
phylogenetics \cite{GD,SS} that  $\mathcal{U}_m$ 
is the support of a pointed simplicial fan of dimension $m-2$.
That fan has $2^m{-}m{-}2$ rays, namely the
{\em clade metrics} $D_\sigma$. A {\em clade} $\sigma$ is
a proper subset of $[m]$ with at least two elements,
and $D_\sigma$ is the ultrametric whose $ij$-th entry 
is $0$ if $i,j \in \sigma$ and $1$ otherwise.
Each cone in that fan structure consists of all ultrametrics
whose tree has a fixed topology. 
We encode each topology by a {\em nested set} \cite{Fei}, i.e.~a 
set of clades $\{\sigma_1,\sigma_2,\ldots,\sigma_d\}$  such that
\begin{equation}
\label{eq:nestedset}
\sigma_i \subset \sigma_j \quad \hbox{or} \quad
\sigma_j \subset \sigma_i \quad \hbox{or} \quad
\sigma_i \,\cap \,\sigma_j = \emptyset
\qquad \hbox{for all} \,\, \,1 \leq i < j \leq d .
\end{equation}
Here $d$ can be any integer between $1$ and $m-2$.
The nested set represents the $d$-dimensional cone
spanned by $\{D_{\sigma_1}, D_{\sigma_2},\ldots,D_{\sigma_d}\}\,$
inside $\,\mathcal{U}_m \subset \mathbb{R}^{\binom{m}{2}} \! / \mathbb{R} {\bf 1}$.
For an illustration of this fan structure, consider 
equidistant trees on $m=4$ taxa. The space of these is a
two-dimensional fan over the {\em Petersen graph}, shown on the left in Fig.~\ref{fig:eins}.

\begin{figure}[h]
  \begin{center}
    \includegraphics[width=5.5cm]{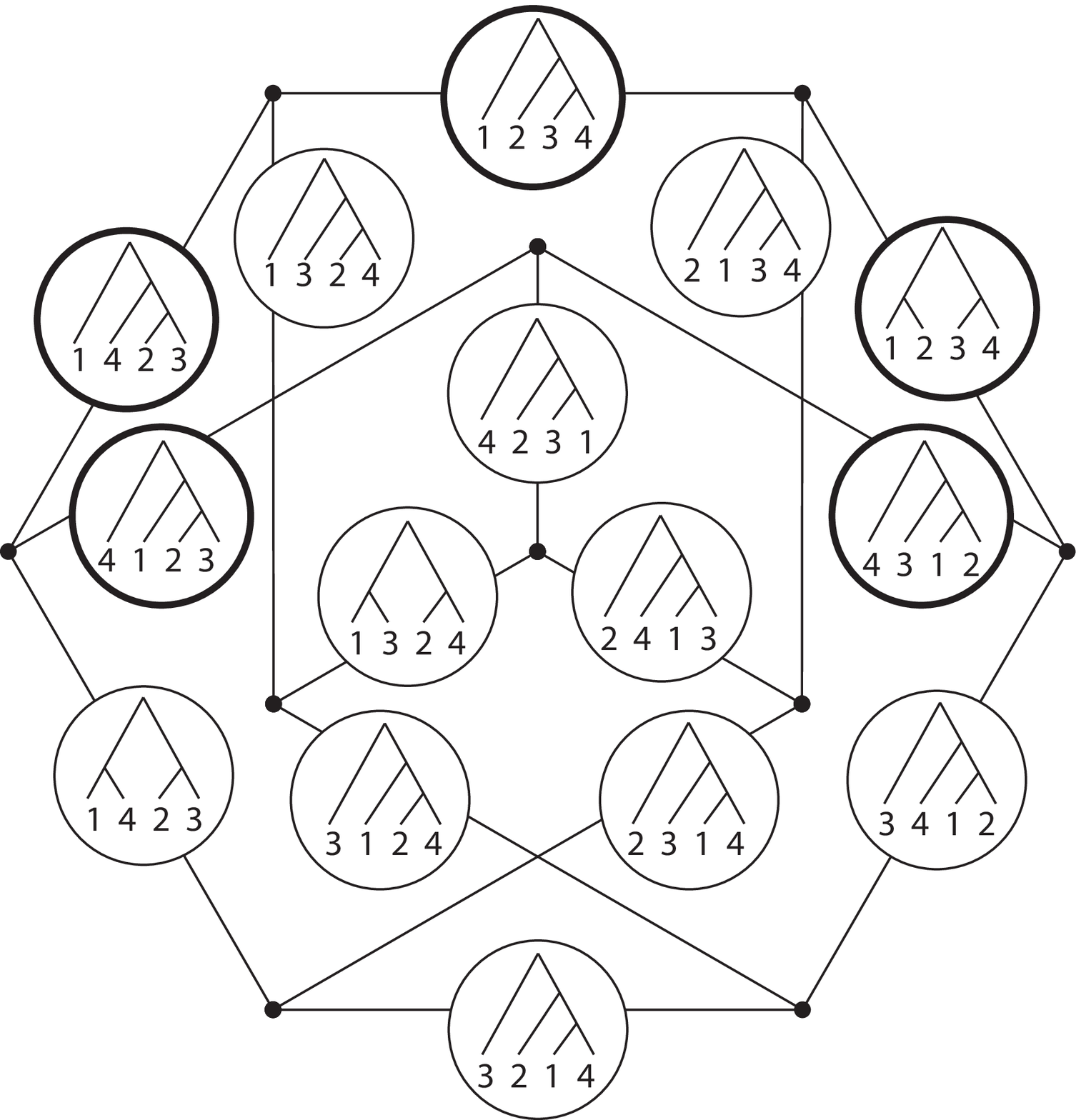}  \qquad
        \includegraphics[width=5.5cm]{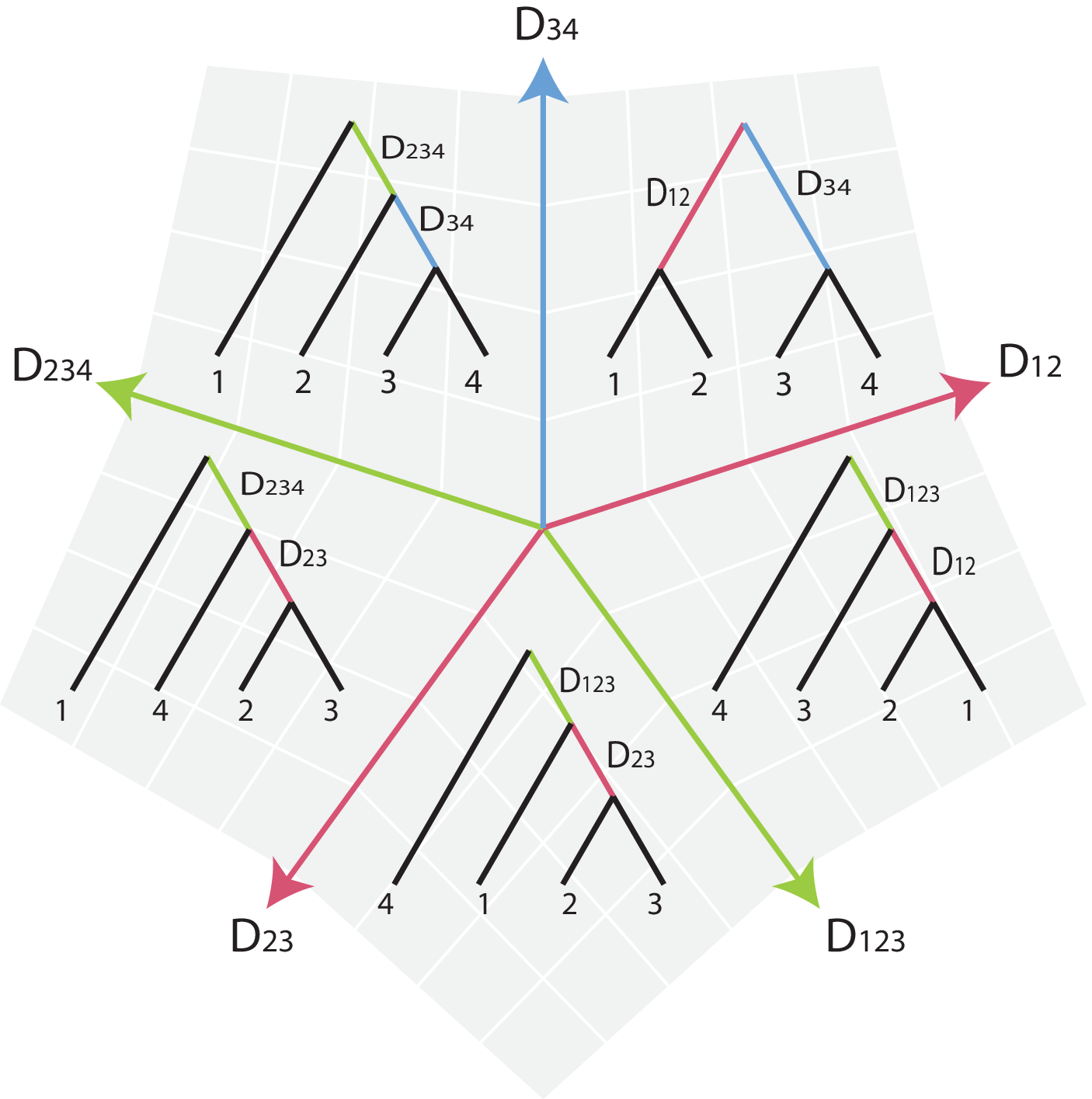} \qquad \qquad
  \end{center}
  \caption{\label{fig:eins}   The space of ultrametrics $\mathcal{U}_4$ 
  is a two-dimensional fan with  $15$ maximal cones.
  Their adjacency forms a Petersen graph.
  Depicted on the right is a cycle of five  cones.}
  \end{figure}
  
At this point it is essential to stress that we have {\bf not} yet defined 
convexity or a metric on $\mathcal{U}_m$.
 So far, our tree space $\mathcal{U}_m$ is nothing
 but a subset of $\mathbb{R}^{\binom{m}{2}} \! / \mathbb{R} {\bf 1}$.
 It is the support of the fan described above,  but even that fan structure is not unique.
There are other meaningful fan structures, classified by the
{\em building sets} in \cite{Fei}. An important one
  is the $\tau$-space of \cite[\S 2]{GD},
known to combinatorialists as the order complex of the partition lattice~\cite{AK}.

The aim of this paper is to compare different
 geometric structures on $\mathcal{U}_m$,
 and to explore  statistical issues.
The first geometric  structure is the metric proposed by
Billera, Holmes and Vogtman in \cite{BHV}. In their setting,
each cone is right-angled. The {\em BHV metric}
is the unique metric on $\mathcal{U}_m$ that restricts to the usual Euclidean
distance on each such right-angled cone. For this to be well-defined,
we must fix a simplicial fan structure on $\mathcal{U}_m$.
This issue is subtle, as  explained by Gavruskin and Drummond in  \cite{GD}.
The BHV metric has the ${\rm  CAT}(0)$-property \cite[\S 4.2]{BHV}.
This implies that between any two points there is a unique geodesic.

Owen and Provan \cite{OP} proved that these geodesics can be computed in polynomial time.
In Section~\ref{sec2}, we present a detailed review and analysis.
This is done in the setting of orthant spaces $\mathcal{F}_\Omega$
associated with flag simplicial complexes $\Omega$.

In Section~\ref{sec3} we study geodesically closed
subsets of an orthant space $\mathcal{F}_\Omega$,
with primary focus on geodesic triangles.
Problem~\ref{prob:berger} asks whether 
these are always closed. Our main result, 
Theorem \ref{thm:highdim}, states that 
the dimension of a geodesic triangle can  be arbitrarily large.
The same is concluded for the tree space $\mathcal{U}_m$ in Corollary~\ref{cor:treebig}.
For experts in phylogenetics,  we note that our results are not restricted to equidistant trees.
They extend naturally to the more familiar BHV space for all rooted trees.

Tropical geometry \cite{MS} furnishes an
alternative geometric structure on $\mathcal{U}_m$,
via  the graphic matroid of the complete graph \cite[Example 4.2.14]{MS}.
More generally, for any matroid $M$, the tropical linear space $ {\rm Trop}(M)$
 is tropically convex by \cite[Proposition 5.2.8]{MS},
and it is a metric space with the tropical distance to be defined in (\ref{eq:tropmetric}).
Tropical geodesics are not unique, but tropically convex sets
have desirable properties. In particular, triangles are always $2$-dimensional.

Section~\ref{sec5} offers an experimental study of Euclidean
geodesics and tropical segments. The
latter are better than the former with regard to {\em depth},
i.e.~the largest codimension of  cones traversed. This
is motivated by the issue of {\em stickiness} in geometric statistics \cite{openbook, Mil}.
Section~\ref{sec6} advocates tropical geometry for statistical applications.
Starting from Nye's principal component analysis in \cite{Nye},
we propose two basic tools for future data analyses:
computation of tropical centroids,
and nearest-point projection onto tropical linear spaces.

In a statistical context, it can be advantageous
to  replace $\,\mathcal{U}_m$ by a compact subspace.
We define  {\em compact tree space}   $\,\mathcal{U}_m^{[1]}\,$
to be the image in $\mathcal{U}_m$ of the set of ultrametrics $D = (d_{ij})$
that satisfy ${\rm max}_{ij} \{d_{ij}\} = 1$.
This is a polyhedral complex consisting of one convex polytope
for each nested set $\{\sigma_1,\sigma_2,\ldots,\sigma_d\}$.
In the notation of (\ref{eq:Drep}), this polytope
consists of all $(\ell_1,\ell_2,\ldots,\ell_d) \in [0,1]^d$
such that  $\ell_{i_1} {+} \ell_{i_2}  {+} \cdots  {+} \ell_{i_d} \leq 1$
 whenever $\sigma_{i_1} \subset \sigma_{i_2} \subset                            
 \cdots \subset \sigma_{i_d}$.  In  phylogenetics,  these are 
equidistant trees of height $\frac{1}{2}$ with a fixed tree topology.
For instance,  $\mathcal{U}_4^{[1]}$ is a polyhedral surface, consisting of
$12$ triangles and $3$ squares, glued along $10$ edges.

\section{Orthant Spaces}
\label{sec2}

In order to understand the geometry of tree spaces, we
 work in the more general setting
of globally nonpositively curved (NPC) spaces. This was suggested by
Miller, Owen and Provan in \cite[\S 6]{MOP}.
We follow their set-up.

Consider a simplicial complex $\Omega$ on
the ground set $[n] = \{1,2,\ldots,n\}$. We say that
$\Omega$ is a {\em flag complex} if all
its minimal non-faces have two elements. Equivalently,
a flag  complex is determined by its edges:
a subset $\sigma$ of $ [n]$ is in $\Omega$ if
and only if $\{i,j\} \in \Omega$ for all $i,j \in \sigma$.
Every simplicial complex $\Omega$ on $[n]$
determines a  simplicial fan $\mathcal{F}_\Omega$ in $\RR^n$.
The cones in $\mathcal{F}_\Omega$ are the orthants
$\mathcal{O}_\sigma = {\rm pos} \{ e_i: i \in \sigma \} $ where $\sigma \in \Omega$.
Here $\{e_1,e_2,\ldots,e_n\}$ is the standard basis of $\RR^n$.
We say that $\mathcal{F}_\Omega$ is an {\em  orthant space}
if the underlying simplicial complex $\Omega$ is flag.
The support of the fan $\mathcal{F}_\Omega$ is turned into a metric space by
fixing the usual Euclidean distance on each orthant.
A path of minimal length between two points is called
a {\em geodesic}.

\begin{proposition}
\label{prop:geodesics}
Let $\mathcal{F}_\Omega$ be an orthant space. For any two points $v$ and $w$ in 
$\mathcal{F}_\Omega$ there exists a unique geodesic between $v$ and $w$.
This geodesic is denoted $G(v,w)$.
\end{proposition}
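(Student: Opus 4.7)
The plan is to deduce uniqueness of geodesics from the fact that $\mathcal{F}_\Omega$ is a globally $\mathrm{CAT}(0)$ space, since in any $\mathrm{CAT}(0)$ space the geodesic between two points is unique (and depends continuously on its endpoints). So the task reduces to verifying that $\mathcal{F}_\Omega$ satisfies the three hypotheses of the Cartan--Hadamard theorem: it is a complete geodesic metric space, it is simply connected, and it is locally $\mathrm{CAT}(0)$ (i.e.\ nonpositively curved).

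First I would observe that completeness and existence of local geodesics are immediate: each orthant $\mathcal{O}_\sigma$ is a complete convex subset of Euclidean space, and the complex $\mathcal{F}_\Omega$ is obtained by isometrically gluing these orthants along their common coordinate faces, so it is a complete piecewise-Euclidean complex with finitely many shapes. Simple connectedness is equally clean: the radial contraction $x \mapsto tx$ for $t \in [0,1]$ maps each orthant into itself and therefore gives a deformation retraction of $\mathcal{F}_\Omega$ onto the origin, so $\mathcal{F}_\Omega$ is contractible, hence simply connected.

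The heart of the argument, and the main obstacle, is the local $\mathrm{CAT}(0)$ condition. By Gromov's link criterion, a piecewise-Euclidean cubical/orthant complex is locally $\mathrm{CAT}(0)$ if and only if the link of every vertex is a $\mathrm{CAT}(1)$ spherical complex, and for all-right spherical simplicial complexes this is equivalent to the combinatorial flag condition. The only point of $\mathcal{F}_\Omega$ whose link is not automatically a sphere is the origin, whose link is the all-right spherical complex with underlying simplicial structure exactly $\Omega$. By hypothesis $\Omega$ is a flag complex, so by Gromov's theorem the link at the origin is $\mathrm{CAT}(1)$; links at all other points of $\mathcal{F}_\Omega$ are open hemispheres or full spheres in Euclidean space, which are trivially $\mathrm{CAT}(1)$. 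This verifies nonpositive curvature everywhere.

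Applying the Cartan--Hadamard theorem for metric spaces (Bridson--Haefliger, Part II), complete plus simply connected plus locally $\mathrm{CAT}(0)$ gives that $\mathcal{F}_\Omega$ is globally $\mathrm{CAT}(0)$. The standard $\mathrm{CAT}(0)$ comparison then produces, for each pair $v,w \in \mathcal{F}_\Omega$, a unique shortest path which we denote $G(v,w)$. I expect the flag verification of the link condition at the origin to be the only nontrivial step; the other items are structural and follow from the orthant-gluing description of $\mathcal{F}_\Omega$.
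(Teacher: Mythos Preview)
Your approach is exactly what the paper invokes: it does not give a self-contained proof but attributes the result to Gromov and cites \cite{ABY} and \cite{MOP}, noting that the flag condition on $\Omega$ is precisely what makes $\mathcal{F}_\Omega$ a ${\rm CAT}(0)$ space. Your sketch supplies the details the paper omits.

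One point needs correction: the claim that links at non-origin points are ``open hemispheres or full spheres'' is not true in general. At a point in the relative interior of a lower-dimensional orthant $\mathcal{O}_\sigma$, the link is the spherical join of the sphere $S^{|\sigma|-1}$ with the all-right spherical complex on ${\rm lk}_\Omega(\sigma)$, and the latter need not be a sphere (think of a ray contained in three or more $2$-dimensional orthants). The fix is immediate, though: links of simplices in a flag complex are again flag, so Gromov's criterion applies inductively at every point, not just the origin. Alternatively, since $\mathcal{F}_\Omega$ is the Euclidean cone on its unit link at the origin, it suffices to check the ${\rm CAT}(1)$ condition only there, and the rest of your argument goes through unchanged.
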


The uniqueness of geodesics is attributed to Gromov. We refer to
\cite[Theorem 2.12]{ABY} and \cite[Lemma 6.2]{MOP} for expositions
and applications of this important result.
The main point is that orthant spaces, with their Euclidean metric as above,
satisfy the  ${\rm CAT}(0)$ property, provided $\Omega$ is flag. This property states that
chords in triangles are no longer than the corresponding chords
in Euclidean triangles. The metric spaces $\mathcal{F}_\Omega$
coming from flag complexes $\Omega $ 
 are called {\em global NPC orthant spaces} 
in \cite{MOP}.  For simplicity we here use the term {\em orthant space}
for $\mathcal{F}_\Omega$.

\begin{example} \rm
Let $n=3$ and $\Omega = \{12, 23, 31\}$,  i.e.~the $3$-cycle. The fan  $\mathcal{F}_\Omega$ is the
boundary of the nonnegative orthant in $\RR^3$. This is not an orthant
space because $\Omega$ is not flag.
Some geodesics in $\mathcal{F}_\Omega$ are not unique:
 the points $v = (1,0,0)$ and $w = (0,1,1)$
have distance $\sqrt{5}$, and there are two geodesics:
one passing through  $(0,\frac{1}{2},0)$ and the other passing through $(0,0,\frac{1}{2})$.
By contrast, let $n=4$ and $\Omega = \{12, 23, 34, 41 \}$, i.e.~the $4$-cycle.
Then  $\mathcal{F}_\Omega$  is a $2$-dimensional orthant space in $\RR^4$.
The Euclidean geodesics on that surface are unique.
\end{example}

The problem of computing the unique geodesics
was solved by Owen and Provan in \cite{OP}.
In \cite{OP}, the focus was
on tree space of Billera-Holmes-Vogtman \cite{BHV}.
It was argued in \cite[Corollary 6.19]{MOP} that  the result extends
to arbitrary orthant spaces.
Owen and Provan gave a polynomial-time algorithm whose input
consists of two points $v$ and $w$ in $\mathcal{F}_\Omega$
and whose output is the geodesic $G(v,w)$.
  We shall now describe their method.
    
Let $\sigma$ be a simplex in a flag complex $\Omega$, with corresponding
orthant $ \mathcal{O}_\sigma $ in $\mathcal{F}_\Omega$. Consider a point 
$v = \sum_{i \in \sigma} v_i e_i $ in $\mathcal{O}_\sigma$
and any face $\tau$ of $\sigma$. We write
$v_\tau = \sum_{i \in \tau} v_i e_i$ for the projection
of $v$ into $\mathcal{O}_\tau$. Its Euclidean length
$|| v_\tau || = \bigl( \sum_{i \in \tau} v_i^2 \bigr)^{1/2}$ is called
the {\em projection length}.

We now assume that $\Omega$ is pure $(d-1)$-dimensional,
i.e.~all maximal simplices in $\Omega$ have the same
dimension $d-1$. This means that all maximal orthants
in $\mathcal{F}_\Omega$ have dimension $d$.
Consider two general points $v$ and $w$ 
in the interiors of full-dimensional orthants
$ \mathcal{O}_\sigma$ and $\mathcal{O}_\tau$ 
of $\mathcal{F}_\Omega$ respectively.
We also assume that $\sigma \cap \tau = \emptyset$.
 Combinatorially, the geodesic $G(v,w)$ is
then encoded by a pair $({\cal A},{\cal B})$ where
${\cal A}=(A_1,\ldots,A_q)$ is an ordered partition
of $\sigma = \{\sigma_1,\ldots,\sigma_d\}$ 
 and ${\cal  B}=(B_1,\ldots,B_q)$ 
is an ordered partition of $\tau = \{\tau_1,\ldots, \tau_d \}$.
These two partitions have the same number $q$ of parts, and they
 satisfy the following three properties:
\begin{itemize}
\item[]{\bf (P1)} for all pairs $i>j$,  the set $A_i \cup B_j$
is a simplex in $\Omega$.
\item[]{\bf (P2)}  $  
||v_{A_1}||/||w_{B_1}||  \,\leq \,
||v_{A_2}||/||w_{B_2}||  \,\leq \,\cdots \,\leq\,
||v_{A_q}||/||w_{B_q}||  $.
\item[]{\bf (P3)} For $i=1,\ldots,q$, there do not exist
  nontrivial partitions $L_1 \cup L_2$ of $A_i$
and $R_1 \cup R_2$ of $B_i$ 
such that $R_1 \cup L_2 \in \Omega$ and 
$ \,||v_{L_1}||/ ||w_{R_1}||\,< \, ||v_{L_2}||/ ||w_{R_2}||$.
\end{itemize}

The following result is due to Owen and Provan \cite{OP}.
They proved it for the case of BHV tree space.  
The general result is stated in \cite[Corollary 6.19]{MOP}.

\begin{theorem}[Owen-Provan]
\label{thm:owenprovan}
Given points $v,w \in \mathcal{F}_\Omega$ satisfying the hypotheses above,
there exists a unique ordered pair of partitions $({\cal A},{\cal B})$ 
satisfying {\bf (P1)}, {\bf (P2)}, {\bf (P3)}.   The geodesic is
a sequence of $q+1$ line segments,
$G(v,w) =  [v,u^1] \cup [u^1,u^2] \cup \cdots \cup [u^q,w]$.
Its length equals
\begin{equation}
\label{eq:optimallength}
 d(v,w) \,= \,
\sqrt{\mathop{\sum}_{j=1}^{q}{\bigl(
\,||v_{A_j}||\,+\,||w_{B_j}||\, \bigr)^2}}.
\end{equation}
In particular,
$({\cal A},{\cal B})$ is the unique pair of ordered partitions that minimizes (\ref{eq:optimallength}).
The breakpoint $u^i$ lives in the orthant of $\mathcal{F}_\Omega$ that is indexed by
$\bigcup_{j=1}^{i-1} B_j \cup  \bigcup_{j=i+1}^{q}  A_j$.
Setting $u^0=v$, the coordinates
of the breakpoints are computed recursively by the formulas
\begin{equation}\label{bp1}
	\qquad u^{i}_{k}\,\,=\,\,\frac{||u^{i-1}_{A_i}||\cdot w_{k}+||w_{B_i}||\cdot u^{i-1}_{k}}{||u^{i-1}_{A_i}||+||w_{B_i}||}
	\qquad \qquad \qquad \qquad 
	\hbox{for} \,\,\,k\in \bigcup_{j=1}^{i-1}{B_j} ,	
\end{equation}	
\begin{equation}\label{bp2}	
	 \qquad u^{i}_{l}\,=\,\frac{u^{i-1}_{l}}{||u^{i-1}_{A_j}||}\cdot \frac{||w_{B_i}||\cdot||u^{i-1}_{A_j}||-||u^{i-1}_{A_i}||\cdot||w_{B_j}||}{||u^{i-1}_{A_i}||+||w_{B_i}||}
	\quad \hbox{for} \, \,\,\, l\in A_j,\,\, i+1\le j\le q.
\end{equation}
	\end{theorem}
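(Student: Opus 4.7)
The plan is as follows. Proposition \ref{prop:geodesics} already provides unique existence of $G(v,w)$, so my task is to identify it concretely. Since the interior of each orthant $\mathcal{O}_C$ is flat Euclidean, the geodesic must be piecewise-linear with bends only on orthant boundaries. Thus $G(v,w)$ determines an ordered sequence of maximal orthants $\mathcal{O}_{C_0}, \mathcal{O}_{C_1}, \ldots, \mathcal{O}_{C_q}$ with $C_0 = \sigma$ and $C_q = \tau$, together with breakpoints $u^i$ on each shared face $C_{i-1} \cap C_i$. Setting $A_i := C_{i-1} \setminus C_i$ and $B_i := C_i \setminus C_{i-1}$ yields ordered partitions of $\sigma$ and $\tau$ of common length $q$. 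The content of \textbf{(P1)} is that each $C_i = (B_1 \cup \cdots \cup B_i) \cup (A_{i+1} \cup \cdots \cup A_q)$ is a face of $\Omega$: under the flag hypothesis this reduces to pairwise compatibility, and the only nontrivial pairs $A_k$--$B_j$ occur with $k > j$, which is exactly \textbf{(P1)}.

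Next, fix a combinatorial type $({\cal A}, {\cal B})$ satisfying \textbf{(P1)}, and minimize the total length over all piecewise-linear paths of that type. The candidate breakpoints $u^1, \ldots, u^q$ are constrained to lie in the respective shared faces, and the squared total length is a smooth convex function of them, so the minimizer is unique. A direct computation of the first-order conditions---reducing at each $u^i$ to a Euclidean projection---yields the closed-form expressions (\ref{bp1}) and (\ref{bp2}). A Pythagorean/unfolding argument, in which consecutive orthants are laid flat so that segment $i$ becomes the diagonal of a rectangle with sides $\|u^{i-1}_{A_i}\|$ and $\|w_{B_i}\|$, then produces formula (\ref{eq:optimallength}) when the contributions are summed.

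Finally, I would optimize over the combinatorial type. Conditions \textbf{(P2)} and \textbf{(P3)} arise as local-optimality criteria: if \textbf{(P2)} fails at some index $i$, then swapping the pairs $(A_i, B_i)$ and $(A_{i+1}, B_{i+1})$ produces another type still satisfying \textbf{(P1)} but with strictly smaller value of (\ref{eq:optimallength}); if \textbf{(P3)} fails, refining $(A_i, B_i)$ into two pairs $(L_1, R_1), (L_2, R_2)$ yields a similar strict improvement. Direct substitution into (\ref{eq:optimallength}) verifies both decreases. Hence the combinatorial type of $G(v,w)$ obeys \textbf{(P1)}--\textbf{(P3)}, and its uniqueness is inherited from the ${\rm CAT}(0)$ uniqueness of the geodesic.

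The hard part, I expect, will be the local-to-global passage: showing that \textbf{(P1)}--\textbf{(P3)} are jointly sufficient and not merely necessary for minimality. My approach would be to prove that the poset of types satisfying \textbf{(P1)}, under the swap and refine operations, is connected, so that any local minimum under these moves is automatically global. An equivalent route is to exhibit at least one type satisfying \textbf{(P1)}--\textbf{(P3)} by an explicit greedy construction, and then appeal to the already-established uniqueness of $G(v,w)$ to conclude that this type realizes the geodesic.
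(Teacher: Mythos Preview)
The paper does not prove this theorem at all. It is stated as the Owen--Provan result and attributed entirely to the references \cite{OP} (for BHV tree space) and \cite[Corollary 6.19]{MOP} (for general orthant spaces); no argument is given beyond those citations. So there is no ``paper's own proof'' to compare your proposal against.

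That said, your outline is a faithful sketch of how the Owen--Provan argument actually goes in \cite{OP}, with one caveat. Your setup tacitly assumes the geodesic traverses a sequence of \emph{maximal} orthants $C_0,\ldots,C_q$ with $|C_i|=d$, so that dropping $A_i$ and adding $B_i$ balances out and the $A_i,B_i$ really partition $\sigma,\tau$. This is not automatic: a priori the geodesic could linger on lower-dimensional faces, and establishing that the optimal path admits such a ``balanced'' description is part of the work in \cite{OP}. Your unfolding/Pythagorean picture for (\ref{eq:optimallength}) implicitly uses this. The rest---\textbf{(P2)} and \textbf{(P3)} as swap/refine improvement moves, and the local-to-global step via connectedness or a greedy construction plus CAT(0) uniqueness---is exactly the structure of the original proof.
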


Computing the geodesic between $v$
and $w$ in the orthant space $\mathcal{F}_\Omega$ means identifying
the optimal pair $(\mathcal{A},\mathcal{B})$ among all 
pairs. This is a combinatorial optimization problem.
Owen and Provan \cite{OP} gave a polynomial-time
algorithm for solving this.

We  implemented this algorithm in {\tt Maple}.
Our code works for any flag simplicial
complex $\Omega$ and for
any points $v$ and $w$ in
the orthant space $\mathcal{F}_\Omega$,
regardless of whether they satisfy the hypotheses of
Theorem~\ref{thm:owenprovan}. 
The underlying geometry is as follows:

\begin{theorem}
	\label{prop:breakpoints}
The degenerate cases not covered by Theorem \ref{thm:owenprovan}
are dealt with as follows:
			(1) If $\sigma \cap \tau \ne \emptyset$, 
			then an ordered pair of partitions
			$({\cal A}, {\cal B})$ is constructed 
			 for $(\sigma \backslash \tau,\, \tau \backslash \sigma)$. 
			 The breakpoint $u^{i}$ lives in the orthant of $\mathcal{F}_\Omega$ indexed by $\bigcup_{j=1}^{i-1}{B_j} \cup \bigcup_{j=i+1}^{q}{A_j} \cup (\sigma \cap \tau)$. 
			 It satisfies
\begin{equation}\label{bp3}
	\qquad  u^{i}_{k}\,=\,\,\frac{||u^{i-1}_{A_i}||\cdot w_{k}+||w_{B_i}||\cdot u^{i-1}_{k}}{||u^{i-1}_{A_i}||+||w_{B_i}||}
	\qquad	\hbox{for} \,\,\,k\in \sigma \cap \tau .
\end{equation}
(2) If $|\sigma|<d$ or $|\tau|<d$, i.e.~$v$ or $w$ is in a lower-dimensional orthant of $\mathcal{F}_\Omega$, then we allow one (but not both) of the parts $A_i$ and $B_i$ to be empty,
and we replace {\bf (P2)} by
	\begin{itemize}
		\item[]{\bf (P2')}  $\quad
		||v_{A_i}|| \cdot ||w_{B_{i+1}}|| \,\leq \,
		||w_{B_i}|| \cdot ||v_{A_{i+1}}||\quad $ for $\,\,1\le i\le q-1$.
	\end{itemize}
	We still get an ordered pair  $({\cal A},{\cal B})$ satisfying 
	{\bf (P1)}, {\bf (P2')}, {\bf (P3)}, but it may not be unique.
	 If $i \geq 0$ is the largest index with $A_i=\emptyset$ 
	  and $j \leq q+1$ is the smallest index with $B_j=\emptyset$, 
	  then $G(v,w)$ is a sequence of $j-i-1$  segments. The
	  breakpoints are given in (\ref{bp1}), (\ref{bp2}), (\ref{bp3}). \smallskip \hfill \\
		(3) Now, the length of the geodesic $G(v,w)$ is equal to
	\[
	d(v,w) \,\, = \,\, 
	\sqrt{\mathop{\sum}_{k\in \sigma \cap \tau}{(v_{k}-w_{k})^2}
	\,+\,\mathop{\sum}_{j=1}^{q}{(||v_{A_j}||+||w_{B_j}||)^2}}.\]
\end{theorem}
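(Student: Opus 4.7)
The plan is to reduce both degenerate cases to the generic setting of Theorem~\ref{thm:owenprovan} by combining the product structure of orthant spaces along a common face with a perturbation argument for the dimension degeneracy; part~(3) then follows by a Pythagorean calculation.

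For case~(1), the key observation is that the star of $\mathcal{O}_{\sigma\cap\tau}$ in $\mathcal{F}_\Omega$ is isometric to the metric product $\mathcal{O}_{\sigma\cap\tau}\times\mathcal{F}_{\Omega'}$, where $\Omega'$ is the link of $\sigma\cap\tau$ in $\Omega$. Because $\Omega$ is flag, so is $\Omega'$, and hence $\mathcal{F}_{\Omega'}$ is itself an orthant space to which Theorem~\ref{thm:owenprovan} applies. I would first show that $G(v,w)$ is contained in this star: any path that leaves the star must drive some coordinate indexed by $\sigma\cap\tau$ down to zero and then back up, which strictly increases length relative to straight-line interpolation on those coordinates. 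Once containment is established, the splitting theorem for CAT(0) geodesics on metric products implies that $G(v,w)$ is the direct product of the Euclidean segment in $\mathcal{O}_{\sigma\cap\tau}$ between $v|_{\sigma\cap\tau}$ and $w|_{\sigma\cap\tau}$ with the Owen--Provan geodesic in $\mathcal{F}_{\Omega'}$ between $v|_{\sigma\setminus\tau}$ and $w|_{\tau\setminus\sigma}$. Reading off coordinates on the link factor gives (\ref{bp1}) and (\ref{bp2}), while (\ref{bp3}) is precisely the reparametrization of the straight segment in $\mathcal{O}_{\sigma\cap\tau}$ at the breakpoint parameter inherited from the link.

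For case~(2), I would approximate $v$ by $v^{(\varepsilon)}$ in the interior of a full-dimensional orthant $\mathcal{O}_{\widetilde\sigma}$ with $\widetilde\sigma\supset\sigma$ (which exists by purity of $\Omega$), and analogously perturb $w$ into $\mathcal{O}_{\widetilde\tau}$. Theorem~\ref{thm:owenprovan}, together with case~(1) if the perturbed orthants meet, produces a unique pair $(\mathcal{A}^{(\varepsilon)},\mathcal{B}^{(\varepsilon)})$ and a corresponding geodesic, and the CAT(0) property guarantees $G(v^{(\varepsilon)},w^{(\varepsilon)})\to G(v,w)$ as $\varepsilon\to 0$. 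In the limit the coordinates of $v^{(\varepsilon)}$ indexed by $\widetilde\sigma\setminus\sigma$ vanish, forcing certain blocks $A_i^{(\varepsilon)}$ to collapse to $\emptyset$, and analogously for the $B_i$; the corresponding segments degenerate to points, leaving exactly $j-i-1$ nontrivial segments. Properties \textbf{(P1)} and \textbf{(P3)} persist under the limit by continuity, while the strict inequalities of \textbf{(P2)} become the non-strict \textbf{(P2')} at the interfaces between adjacent empty and nonempty parts. The non-uniqueness of $(\mathcal{A},\mathcal{B})$ reflects only the freedom in where to insert the empty blocks; the geodesic itself is unaffected and remains unique by CAT(0). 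Finally, part~(3) is immediate from case~(1) and the Pythagorean theorem: the straight segment in $\mathcal{O}_{\sigma\cap\tau}$ contributes $\sum_{k\in\sigma\cap\tau}(v_k-w_k)^2$, orthogonally to the contribution $\sum_{j=1}^{q}(\|v_{A_j}\|+\|w_{B_j}\|)^2$ from (\ref{eq:optimallength}) applied to the link geodesic.

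The step I expect to be most delicate is establishing geodesic containment in the star of $\mathcal{O}_{\sigma\cap\tau}$ in case~(1). Intuitively any detour sacrifices straightness in the shared coordinates, but making this rigorous requires a careful length comparison between $\mathcal{F}_\Omega$ and the product, and genuinely uses the flag hypothesis to ensure that $\Omega'$ is flag and $\mathcal{F}_{\Omega'}$ admits the Owen--Provan machinery. The continuity argument in case~(2) is easier but still relies on the stability of the Owen--Provan combinatorial data under perturbations, which is controlled because $(\mathcal{A},\mathcal{B})$ uniquely minimizes the strictly convex length functional on each combinatorial stratum.
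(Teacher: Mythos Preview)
Your proposal is correct and in fact far more detailed than the paper's own proof, which consists of a single sentence: ``This is an extension of the discussion for BHV tree space in \cite[\S 4]{OP}.'' The product decomposition you describe for case~(1)---splitting off the Euclidean factor $\mathcal{O}_{\sigma\cap\tau}$ and running Owen--Provan on the link $\mathcal{F}_{\Omega'}$---is precisely the mechanism Owen and Provan use in \cite[\S 4]{OP} to handle common edges in the tree setting, so your approach and the paper's (cited) approach coincide for that part. Your perturbation argument for case~(2) is a clean way to obtain the boundary cases from the generic one via ${\rm CAT}(0)$ continuity of geodesics; the paper does not spell this out at all, so there is nothing to compare against beyond the citation. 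Your identification of the star-containment step as the delicate point is accurate; one convenient way to make it rigorous is to observe that the coordinate projection onto $\RR^{\sigma\cap\tau}$ (extended by zero off the star) is $1$-Lipschitz on all of $\mathcal{F}_\Omega$, so any geodesic projects to a path no longer than the straight segment, forcing equality and hence that the $\sigma\cap\tau$-coordinates never vanish along $G(v,w)$.
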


\begin{proof}
This is an extension of the discussion for BHV tree space in \cite[\S 4]{OP}.
\end{proof}

Our primary object of interest is the  space of ultrametrics $\,\mathcal{U}_m$.
We shall explain its metric structure as an orthant space. It is crucial to 
note that this structure is not unique. Indeed, any polyhedral fan $\Sigma$ in
$\RR^d$ can be refined to a simplicial fan with $n$ rays whose underlying
simplicial complex $\Omega$ is flag. 
 That fan still lives in $\RR^d$, whereas the 
orthant space $\mathcal{F}_\Omega$ 
lives in $\RR^n$.
There is a canonical piecewise-linear isomorphism
between $\mathcal{F}_\Omega$ and the support
$|\Sigma|$, and the Euclidean metric on the former
induces the metric on the latter. The resulting
metric on $|\Sigma|$ depends on the choice of simplicial subdivision.
A different $\Omega$ gives a different metric.

\begin{example}\label{ex:differentomega} \rm
Let $d=2$ and $\Sigma = \RR_{\geq 0}^2$ be the cone
spanned by $v^1 = (1,0)$ and $v^2 = (0,1)$.
In the usual metric on $\Sigma$, the distance between $v^1$ and
$v^2$ is $\sqrt{2}$, and the geodesic passes through $(\frac{1}{2},\frac{1}{2})$.
We refine $\Sigma$ by adding the ray spanned by
$v^3 = (1,1)$. The simplicial complex $\Omega$ has facets
$13$ and $23$, and $\mathcal{F}_\Omega$ is
the fan in $\RR^3$ with maximal cones ${\rm pos}(e_1,e_3)$ and ${\rm pos}(e_2,e_3)$.
The map from this orthant space onto $\Sigma$ induces a different metric.
In that new metric, the distance between $v^1$ and
$v^2$ is $2$, and the geodesic is the {\em cone path} passing through $(0,0)$.
\end{example}

Let $n = 2^m - m -2$. The standard basis vectors in $\RR^n$ are denoted $e_\sigma$,
one for each clade $\sigma $ on $[m]$. Let $\Omega$ be the flag simplicial complex
whose simplices are  the nested sets $\{\sigma_1,\ldots,\sigma_d\}$ as defined in
(\ref{eq:nestedset}). The piecewise-linear isomorphism from the
orthant space $\mathcal{F}_\Omega$~to $\mathcal{U}_m$ takes
the basis vector $e_\sigma$ to the clade metric $D_\sigma $,
and this induces the BHV metric on $\mathcal{U}_m$.
Each orthant in $\mathcal{F}_\Omega$ represents the set of all equidistant trees in $\mathcal{U}_m $ that have a fixed topology.

In Section \ref{sec1},
 each ultrametric $D = (d_{ij}) $ is an element of $\RR^{\binom{m}{2}}\! / \RR \mathbf{1}$.
It has a unique representation
\begin{equation}
\label{eq:Drep}
D \,\, = \,\,
\ell_1 D_{\sigma_1} + 
\ell_2 D_{\sigma_2} +
\cdots + 
 \ell_d D_{\sigma_d},
\end{equation}
 where $\{\sigma_1,\ldots,\sigma_d\} \in \Omega$ and
 $\ell_1,\ldots,\ell_d \geq 0 $. The coefficient $\ell_i$
 is twice the length of the edge labeled $\sigma_i$ in the tree given by $D$.
 It can be recovered from $D $ by the formula
\begin{equation}
\label{eq:recoveredge}
 \ell_i \,\, = \,\,   {\rm min}\bigl\{d_{rt}\,:\,r \in \sigma_i, t \not\in \sigma_i \bigr\}\,\,-\,\,
 {\rm max}\bigl\{d_{rs} \,:\, r,s\in \sigma_i \bigr\} .
\end{equation}
 The maximal nested sets have cardinality $m-2$,
 so this is the dimension of the orthant space $\mathcal{F}_\Omega$.
 The number of maximal nested sets is
$(2m-3)!! = 1\cdot 3 \cdot 5 \cdot \cdots \cdot (2m-3)$.
For instance, Fig.~\ref{fig:eins} shows
that $\mathcal{U}_4$ is consists of
$15$ two-dimensional cones  and $10$ rays.

\begin{figure}[h]
  \begin{center}
    \includegraphics[width=5.5cm]{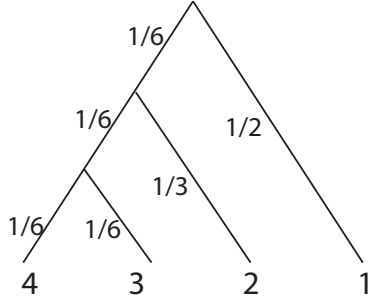} 
  \end{center}
  \caption{The equidistant tree in
    $\mathcal{U}_4^{[1]}$ discussed in Example \ref{eg1}. }\label{fig:exampleT4} 
\end{figure}

\begin{example}\label{eg1} \rm
The equidistant tree in Fig.~\ref{fig:exampleT4} corresponds to the  ultrametric 
\[
D \,=\, (d_{12}, d_{13}, d_{14}, d_{23}, d_{24}, d_{34}) \,=\, 
\bigl(1, 1, 1, \frac{2}{3}, \frac{2}{3}, \frac{1}{3} \bigr) \,\,\, \in \,\,\, \mathcal{U}_4^{[1]}
\,\,\subset \,\, \mathcal{U}_4 .
\]
The clade metrics for the two internal edges are
\[
D_{\sigma_1} = D_{{34}} = (1, 1, 1, 1, 1, 0) \quad \hbox{and} \quad
D_{\sigma_2} = D_{{234}} = (1, 1, 1, 0, 0, 0).
\]
The formula (\ref{eq:recoveredge}) gives
$\ell_1 = \ell_2 = 1/3$. Since all vectors live in $\RR^6$ 
modulo $\RR {\bf 1}$, we have
\[
\ell_1 D_{\sigma_1} + \ell_2D_{\sigma_2} \,=\,
\bigl(\frac{2}{3}, \frac{2}{3}, \frac{2}{3},  \frac{1}{3}, \frac{1}{3}, 0 \bigr)
\,\,= \,\,D.
\]
But, the orthant space that endows $\,\mathcal{U}_4$ with its metric lives in $\RR^{10}$,
and not in $\RR^{6}/\RR {\bf 1}$.
\end{example}

We close this section by reiterating
this extremely important remark.
In Section \ref{sec1} we introduced the set $\,\mathcal{U}_m$ of ultrametrics  as a subset of
a low-dimensional ambient space, having dimension $\binom{m}{2}-1$.
In Section \ref{sec2} we elevated $\mathcal{U}_m$ to live in a
high-dimensional ambient space, having dimension $2^m-m-2$. It is only
the latter realization, as an orthant space, that is used when we 
compute Euclidean distances. In other words, when we compute
geodesics in the BHV metric on tree space, we use the coordinates $\ell_1, 
\ldots, \ell_d$. These are local coordinates on the right-angled cones.
The coordinates $d_{12}, \ldots , d_{ (m-1)m}$ are never to be used
for computing BHV geodesics.

\section{Geodesic Triangles}
\label{sec3}

Biologists are interested in BHV tree space 
as a statistical tool for studying evolution. The geometric
structures in \cite{BHV, MOP} are motivated
by applications such as \cite{Nye}.
This requires a notion of convexity.

We fix a flag simplicial complex $\Omega$ on $[n]$.
The orthant space $\mathcal{F}_\Omega \subset \RR^n$
with its intrinsic Euclidean metric has unique geodesics
$G(v,w)$ as in Theorems \ref{thm:owenprovan} and \ref{prop:breakpoints}.
A subset $T$ of $\mathcal{F}_\Omega $
is called {\em geodesically convex}
if, for any two points $v,w\in T$, 
the unique geodesic $G(v,w)$ is contained in $T$.

Given a subset $S$ of $\mathcal{F}_\Omega$, its
{\em geodesic convex hull} ${\rm gconv}(S)$ is
 the smallest geodesically convex set in $\mathcal{F}_\Omega$
 that contains $S$. 
 If $S = \{v^1,v^2,\ldots,v^s\}$ is a finite set then
 we say that ${\rm gconv}(S)$ is a {\em geodesic polytope}.
If $s=2$ then  we recover the geodesic segment
$\, {\rm gconv}(v^1,v^2) = G(v^1,v^2)$.
If $s=3$ then we obtain a  {\em geodesic triangle}
${\rm gconv}(S) = {\rm gconv}(v^1,v^2,v^3)$.
The main point of this section is to demonstrate
that geodesic triangles are rather complicated objects.

We begin with an iterative scheme for
computing geodesic polytopes.
Let $S$ be any subset of $\mathcal{F}_\Omega$.
Then we can form the union of all geodesics with endpoints in $S$:
	\[ g(S) \,\, = \,\, \bigcup_{v,w \in S} {G(v,w)}. \]
	For any integer $t\ge 1$, define $g^{t}(S)$ recursively as
        $g^{t}(S)=g(g^{t-1}(S))$, with $g^0(S) = S$.

\begin{lemma}
\label{lem:moreandmore}
	Let $S$ be a set of points in $\mathcal{F}_\Omega$. Then 
	its geodesic convex hull equals
	\[gconv(S)\,\,=\,\,\bigcup_{t=0}^{\infty}{g^{t}(S)}.\]
	In words, the geodesic convex hull of $S$ is the set of all points in 
	$\mathcal{F}_\Omega$ that can be generated in 
	finitely many steps from $S$ by taking geodesic segments.
\end{lemma}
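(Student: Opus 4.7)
The plan is to prove the two inclusions separately, using the characterization of $\mathrm{gconv}(S)$ as the \emph{smallest} geodesically convex set containing $S$.

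For the inclusion $\bigcup_{t \geq 0} g^t(S) \subseteq \mathrm{gconv}(S)$, I would argue by induction on $t$. The base case $g^0(S) = S \subseteq \mathrm{gconv}(S)$ is immediate. For the inductive step, if $g^{t-1}(S) \subseteq \mathrm{gconv}(S)$, then every geodesic $G(v,w)$ with $v,w \in g^{t-1}(S)$ lies inside $\mathrm{gconv}(S)$ by geodesic convexity of the latter; taking the union over all such pairs gives $g^t(S) \subseteq \mathrm{gconv}(S)$.

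For the reverse inclusion, I would show that $U := \bigcup_{t \geq 0} g^t(S)$ is itself geodesically convex and contains $S$; minimality of $\mathrm{gconv}(S)$ then forces $\mathrm{gconv}(S) \subseteq U$. The containment $S = g^0(S) \subseteq U$ is trivial. The key preliminary observation is that the sequence $g^t(S)$ is nondecreasing: for any $v \in g^{t-1}(S)$, the degenerate geodesic $G(v,v) = \{v\}$ shows $v \in g^t(S)$, so $g^{t-1}(S) \subseteq g^t(S)$. Now, to check geodesic convexity of $U$, take $v, w \in U$. Then $v \in g^{t_1}(S)$ and $w \in g^{t_2}(S)$ for some $t_1, t_2$, and by monotonicity both lie in $g^t(S)$ where $t := \max(t_1, t_2)$. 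By definition of $g$, the unique geodesic $G(v,w)$ (which exists and is unique by Proposition~\ref{prop:geodesics}) is contained in $g(g^t(S)) = g^{t+1}(S) \subseteq U$.

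There is no real obstacle here; the only subtlety worth flagging is the monotonicity step, which relies on the convention that single points count as degenerate geodesics. If one wanted to avoid this convention, one could instead define $U$ as $S \cup \bigcup_{t \geq 1} g^t(S)$, or simply note that $g(S) \supseteq S$ whenever $|S| \geq 1$ by choosing $w = v$ in the definition of $g$. Either way, the combinatorial content is minimal and the statement follows directly from the universal property defining $\mathrm{gconv}(S)$.
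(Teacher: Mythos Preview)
Your proof is correct and follows essentially the same approach as the paper: both prove $\bigcup_t g^t(S)\subseteq\mathrm{gconv}(S)$ by induction on $t$, and both establish the reverse inclusion by showing the union is geodesically convex via $G(v,w)\subseteq g^{\max(t_1,t_2)+1}(S)$. Your version is slightly more explicit in isolating the monotonicity $g^{t-1}(S)\subseteq g^t(S)$, which the paper uses tacitly.
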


\begin{proof}
If $T$ is a subset of ${\rm gconv}(S)$ then
 $g(T) \subseteq {\rm gconv}(S)$. By induction on $t$, we see that
	$\,g^{t}(S)\subseteq {\rm gconv}(S)$ for all $t \in \mathbb{N}$. Therefore,
	 $\bigcup_{t=0}^{\infty}{g^{t}(S)}\subseteq {\rm gconv}(S)$.
	  On the other hand, for any two points $v,w\in \bigcup_{t=0}^{\infty}{g^{t}(S)}$, there exist positive integers $t_1,t_2$ such that $v\in g^{t_1}(S)$
	  and $w \in g^{t_2}(S)$. The geodesic path $G(v,w)$ is contained in
	  $g^{\max(t_1,t_2)+1}(S)$, so $G(v,w)$ is in 
	$   \bigcup_{t=0}^{\infty}{g^{t}(S)}$. So, this set is geodesically convex, and
	   we conclude that it equals ${\rm gconv}(S)$.
\end{proof}

Lemma \ref{lem:moreandmore} gives
a numerical method for approximating 
geodesic polytopes by iterating the
computation of geodesics. However, it is not clear
whether this process converges.
The analogue for negatively curved continuous spaces 
arises in \cite[Lemma 2.1]{FMPV}, along with a pointer to the
following open problem stated in \cite[Note 6.1.3.1]{Ber}:

\smallskip

{\em ``An extraordinarily simple question is still open (to the best of
our knowledge). What is the convex envelope of three points in a 3- or higher-dimensional Riemannian manifold? We look for the smallest possible set which
contains these three points and which is convex. For example, it is unknown
if this set is closed. The standing conjecture is that it is not closed, except in
very special cases, the question starting typically in $\mathbb{CP}^2$. The only text we
know of addressing this question is {\rm \cite{Bow}}.''}  \hfill
It seems that this question is also open in our setting:

\begin{problem} \label{prob:berger}
Are geodesic triangles  in orthant spaces
$\mathcal{F}_\Omega$ always closed?
\end{problem}

If $T$ is a geodesically convex subset of $\mathcal{F}_\Omega$
then its restriction $T_\sigma \, = \,T \cap \mathcal{O}_\sigma$
to any orthant is a convex set in the usual sense.
If Problem~\ref{prob:berger} has an affirmative answer
then one might further conjecture that each geodesic polytope~$T$
is a polyhedral complex with cells $T_\sigma$.
This holds in the examples we computed, but the
matter is quite subtle. The segments of the pairwise geodesics need not
be part of the complex $\{T_\sigma\}$, as the following example shows.

\begin{example} 
\label{ex:threepages} \rm
Consider a $2$-dimensional orthant space
that is locally an open book \cite{openbook}
with three pages. We pick three points  $a,b,c$ on these pages
as shown in Fig.~\ref{fig:subtle} and~\ref{fig:beauty}.
The pairwise geodesics $G(a,b)$, $G(a,c)$ and $G(b,c)$ 
determine a set that is not a polyhedral complex unless one
triangle is subdivided. That set, shown on the left in Fig.~\ref{fig:subtle},
is not geodesically convex. We must enlarge it to get
the geodesic triangle ${\rm gconv}(a,b,c)$,
shown on the right in Fig.~\ref{fig:subtle}.
It consists of three classical triangles, one in each page of the book.
Note that the geodesic from $a$ to $c$ travels
through the interiors of two classical triangles.
\end{example}

\begin{figure}[h!]
  \begin{center}
    \includegraphics[width=5.5cm]{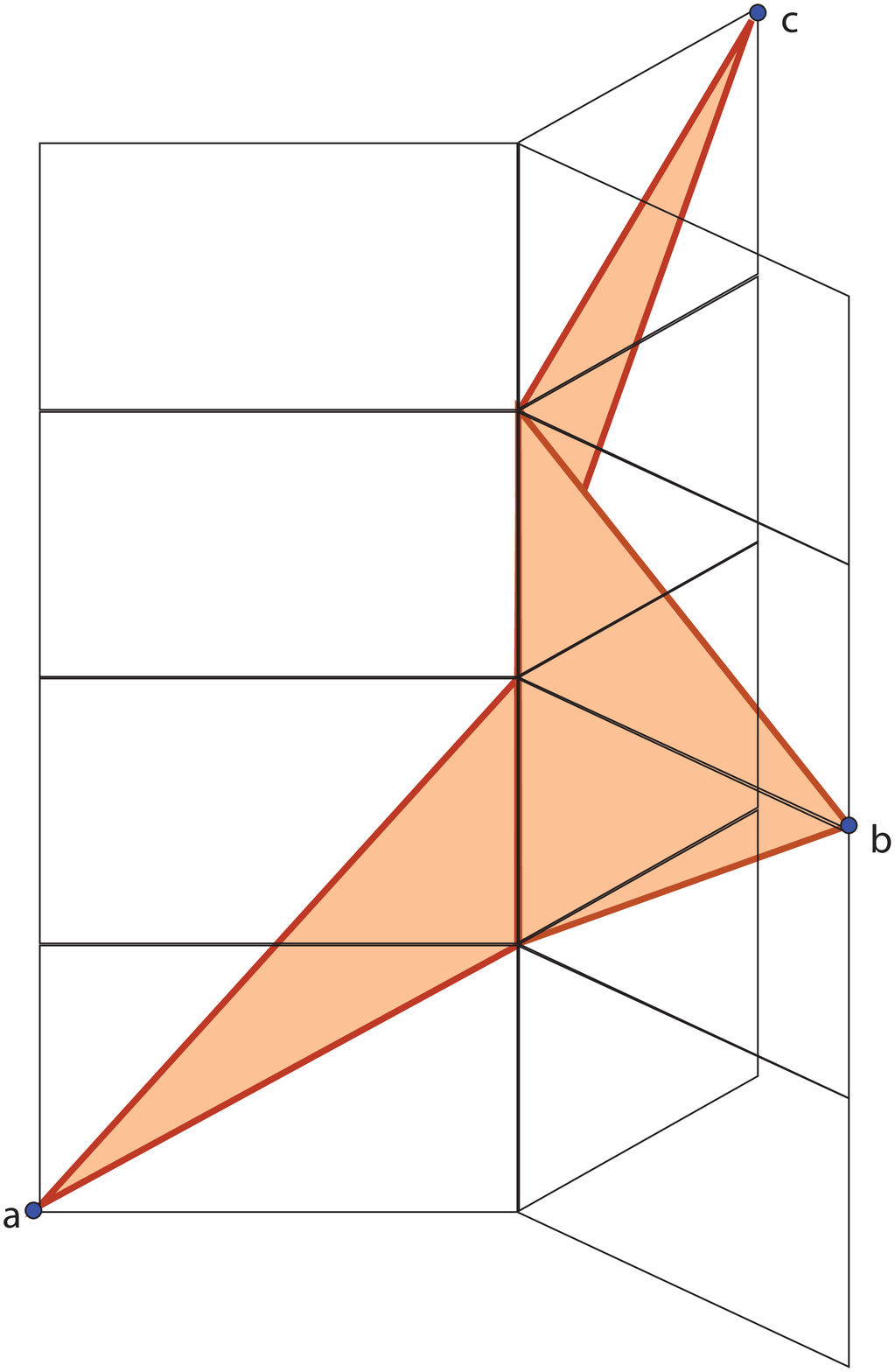} \quad
        \includegraphics[width=5.5cm]{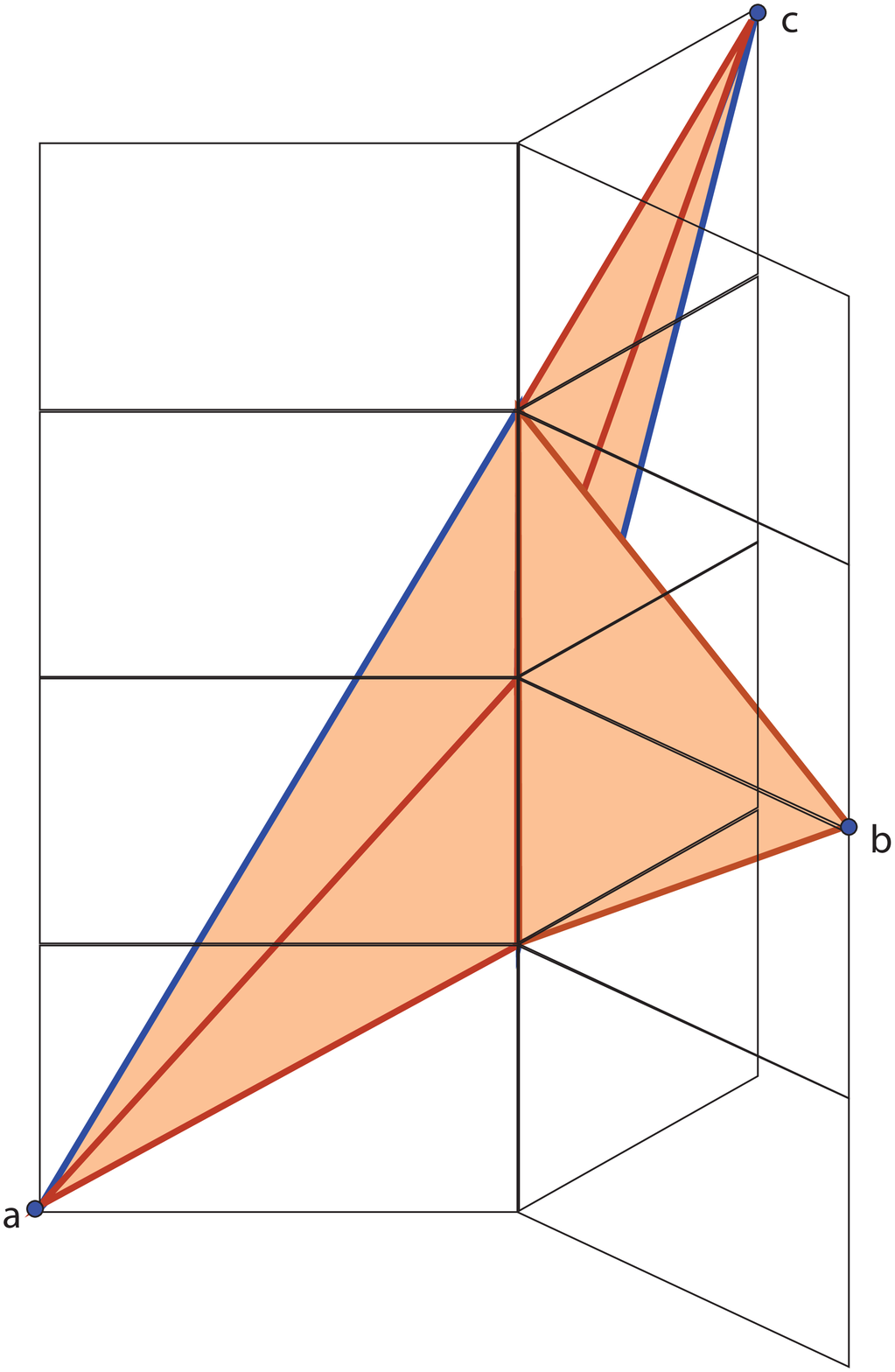}
  \end{center}
  \caption{\label{fig:subtle} 
  Three points  in the open book with three pages.
  Their geodesic convex hull is shown on the right.
  The left diagram shows the subset formed
  by the pairwise geodesics.
    } \end{figure}

We next present a sufficient condition for a set $T$ to be
geodesically convex. We regard each orthant 
$\mathcal{O}_\sigma \simeq \RR^d_{\geq 0} $ as a poset 
by taking the component-wise partial order.

\begin{theorem} \label{lem:geocon}
Let $\,T$ be a subset of an orthant space $\mathcal{F}_\Omega$
such that,  for each simplex $\sigma \in \Omega$,
the restriction $T_\sigma$ is both convex and  an order ideal in $\mathcal{O}_\sigma$.
Then $\,T$ is geodesically convex.
\end{theorem}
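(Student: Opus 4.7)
The plan is to exploit the explicit description of the geodesic $G(v,w)$ given by Theorems~\ref{thm:owenprovan} and~\ref{prop:breakpoints}. The geodesic decomposes as $G(v,w) = [v,u^1] \cup [u^1,u^2] \cup \cdots \cup [u^q,w]$, with the $i$th segment contained in the orthant $\mathcal{O}_{\rho_i}$ corresponding to
\begin{equation*}
\rho_i \,\,=\,\, B_1 \cup \cdots \cup B_{i-1} \,\cup\, A_i \cup \cdots \cup A_q \,\cup\, (\sigma \cap \tau),
\end{equation*}
which is a simplex of $\Omega$ by property~\textbf{(P1)}. Since each restriction $T_{\rho_i}$ is convex, once both endpoints of a segment lie in $T_{\rho_i}$ so does the entire segment. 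The proof therefore reduces to showing, by induction on $i$, that every breakpoint $u^i$ lies in $T$.

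The base case $u^0 = v \in T$ is trivial. For the inductive step, assume $u^{i-1}\in T$. Since the support of $u^{i-1}$ is contained in $\rho_i$, we have $u^{i-1}\in T_{\rho_i}$. Let $w_{|\rho_i}$ denote the projection of $w$ onto the face of $\mathcal{O}_\tau$ indexed by $\tau\cap\rho_i$. The order ideal hypothesis on $T_\tau$ gives $w_{|\rho_i}\in T_\tau$, and since its support lies in $\rho_i$, also $w_{|\rho_i}\in T_{\rho_i}$. The key move is to set
\begin{equation*}
 p \,\,:=\,\, (1-\lambda)\, u^{i-1} \,+\, \lambda\, w_{|\rho_i}, \qquad \lambda \,\,:=\,\, \frac{\|u^{i-1}_{A_i}\|}{\|u^{i-1}_{A_i}\| + \|w_{B_i}\|}\,\in\,[0,1],
\end{equation*}
so that $p\in T_{\rho_i}$ by convexity.

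The technical heart is the coordinatewise inequality $u^i \le p$ in $\mathcal{O}_{\rho_i}$. On $k\in B_j$ with $j\le i-1$, and on $k\in\sigma\cap\tau$, formulas~(\ref{bp1}) and~(\ref{bp3}) yield the exact equality $u^i_k = (1-\lambda) u^{i-1}_k + \lambda w_k = p_k$. For $l\in A_i$ we have $u^i_l=0\le p_l$. For $l\in A_j$ with $j>i$, a rearrangement of~(\ref{bp2}) produces
\begin{equation*}
u^i_l \,\,=\,\, p_l \,\,-\,\, \lambda\, u^{i-1}_l \cdot \frac{\|w_{B_j}\|}{\|u^{i-1}_{A_j}\|} \,\,\le\,\, p_l.
\end{equation*}
Applying the order ideal property of $T_{\rho_i}$ to $u^i\le p$ yields $u^i\in T_{\rho_i}\subseteq T$, closing the induction. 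Convexity of each $T_{\rho_i}$ then gives $[u^{i-1},u^i]\subseteq T_{\rho_i}\subseteq T$ for every $i$, and hence $G(v,w)\subseteq T$.

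The main obstacle is pinning down the correct dominating witness: neither convexity nor the order ideal property alone can push $u^i$ into $T$ from the inductive data. It is a fortunate fact that the Owen--Provan recursion realizes $u^i$ as precisely a convex combination of $u^{i-1}$ and the projection of $w$, offset by a nonnegative downward correction on the decaying $A_j$ coordinates---exactly the combination of the two hypotheses. The degenerate cases of Theorem~\ref{prop:breakpoints} (empty parts $A_i$ or $B_i$, when $v$ or $w$ lies in a lower-dimensional orthant) fit the same template after suppressing the corresponding trivial segments.
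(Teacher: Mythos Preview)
Your proof is correct and follows essentially the same approach as the paper's: reduce to showing each breakpoint $u^i$ lies in $T$, then produce a dominating point in $T_{\rho_i}$ as a convex combination of $u^{i-1}$ and a projection of $w$, and invoke the order ideal property. The paper only writes out the case $i=1$ (with the slightly different witness $u^*=\lambda\, v_{\sigma\setminus A_1}+(1-\lambda)\,w_{\sigma\cap\tau}$, projecting $v$ as well) and then appeals to the fact that $u^i$ is the first breakpoint of $G(u^{i-1},w)$; your explicit induction with the witness $p$ is a clean way to say the same thing.
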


\begin{proof}
Let $v\in T_\sigma$, $w\in T_\tau$ and $G(v,w) =  [v,u^1] \cup [u^1,u^2] \cup \cdots \cup [u^q,w]$. 
In order to prove $G(v, w)\subset T$,  it suffices to show $u^i\in T$ 
for all $i$, since the restriction of $T$ 
 to each orthant is convex.
 We first prove  $u^1\in T$ by constructing a point $u^*\in T_{\sigma}$ such that $u^1\leq u^*$.
We let
\[u^*\,\,=\,\,
\lambda v_{\sigma\backslash A_1} + (1-\lambda ) w_{\sigma\cap \tau}, \;\;\; \text{where}\; 
\,\,\lambda\,=\, \frac{||w_{B_1}||}{||v_{A_1}||+||w_{B_1}||}.\]
Since the restriction of $T$ to each orthant is an order ideal, 
we know $v_{\sigma\backslash A_i}\leq v\in T_{\sigma}$. We also have $w_{\sigma\cap \tau}\leq w\in T_{\tau}\subset T$ and hence $w_{\sigma\cap \tau}\in T_{\sigma}$.  
Thus, $u^*\in T_{\sigma}$ since $T_\sigma$ is convex.
By  formula (\ref{bp2}), 
$u^*_k =u^1_k + (1-\lambda)\frac{||w_{B_j}||}{||v_{A_j}||}v_k $
 if $k\in A_j$
   for some $j=2, \ldots, q$. By  formula (\ref{bp1}), 
$u^*_k = u^{1}_k$  if $k\in {\sigma}\backslash (\bigcup_{j=2}^q A_j) $.
Since $\lambda\leq 1$, we conclude  $u^1\leq u^*$.
This implies $u^1\in T_{\sigma} \subset T$. 
By a similar argument,  $u^i\in T$ since $u^{i}$ is the first breakpoint of $G(u^{i-1}, w)$ for every $i=2, \ldots, q$. 
 \end{proof}

\begin{corollary}
The compact tree space $\,\mathcal{U}_m^{[1]}$ is geodesically convex.
\end{corollary}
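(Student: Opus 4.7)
The plan is to apply Theorem \ref{lem:geocon} directly to $T = \mathcal{U}_m^{[1]}$ inside the orthant space $\mathcal{F}_\Omega$ constructed in Section \ref{sec2}. What I need to verify is that for every nested set $\sigma = \{\sigma_1,\ldots,\sigma_d\} \in \Omega$, the slice $T_\sigma = \mathcal{U}_m^{[1]} \cap \mathcal{O}_\sigma$ is simultaneously convex and downward-closed with respect to the componentwise order on $\mathcal{O}_\sigma \simeq \RR_{\geq 0}^d$.

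First I would write down $T_\sigma$ in the local coordinates $(\ell_1,\ldots,\ell_d)$ of the representation (\ref{eq:Drep}). Using the recovery formula (\ref{eq:recoveredge}) and the fact that the maximum entry of an equidistant ultrametric is attained along any chain of clades from a leaf up to the root-complement, the condition $\max_{ij}\{d_{ij}\}=1$ translates, as recalled in the introduction, into the inequalities
\[
\ell_{i_1} + \ell_{i_2} + \cdots + \ell_{i_k} \,\leq\, 1 \quad \text{for every chain } \sigma_{i_1} \subset \sigma_{i_2} \subset \cdots \subset \sigma_{i_k},
\]
together with $\ell_i \geq 0$. Thus $T_\sigma$ is cut out of $\mathcal{O}_\sigma$ by a finite collection of linear inequalities of the form $\sum_{i \in C} \ell_i \leq 1$, indexed by chains $C$ in the nested set.

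Convexity of $T_\sigma$ is then immediate, since it is an intersection of half-spaces. For the order-ideal property, I would observe that every defining inequality $\sum_{i \in C} \ell_i \leq 1$ is monotone: decreasing any coordinate $\ell_i$ to some $\ell_i' \in [0,\ell_i]$ preserves the inequality. Therefore if $(\ell_1,\ldots,\ell_d) \in T_\sigma$ and $0 \leq \ell_i' \leq \ell_i$ for all $i$, the point $(\ell_1',\ldots,\ell_d')$ still lies in $T_\sigma$, confirming that $T_\sigma$ is an order ideal in $\mathcal{O}_\sigma$.

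Having verified both hypotheses, Theorem \ref{lem:geocon} yields that $\mathcal{U}_m^{[1]}$ is geodesically convex in $\mathcal{F}_\Omega$, and hence in $\mathcal{U}_m$. I do not anticipate a significant obstacle: the only conceptual care required is to check that the defining constraints of the compact tree space are genuinely expressible as monotone linear inequalities in the local orthant coordinates, rather than in the ambient $d_{ij}$ coordinates (where monotonicity would fail). Once this is done, both conditions of Theorem \ref{lem:geocon} hold simplex by simplex, with no global compatibility issues, since the theorem is stated purely orthant-locally.
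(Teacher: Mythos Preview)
Your proposal is correct and follows essentially the same route as the paper: apply Theorem~\ref{lem:geocon} to $T=\mathcal{U}_m^{[1]}$, using the explicit polytope description of $T_\sigma$ from Section~\ref{sec1} to check convexity and the order-ideal property. The only cosmetic difference is that you verify downward closure algebraically via the monotonicity of the chain inequalities $\sum_{i\in C}\ell_i\le 1$, whereas the paper phrases the same observation phylogenetically (shrinking edge lengths can only shrink pairwise leaf distances, hence preserves the bound on $\max_{ij}d_{ij}$).
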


\begin{proof}
If  $T = \mathcal{U}_m^{[1]}$ then $T_\sigma$ is a subpolytope of 
the cube $[0,1]^d$, with coordinates $\ell_1,\ldots,\ell_d$ as in the
end of Section \ref{sec1}.
The polytope $T_\sigma$ is an order ideal because decreasing
 the edge lengths in a phylogenetic tree can only decrease
 the distances   between pairs of leaves.
\end{proof}

The sufficient condition in Theorem \ref{lem:geocon}
is far from necessary. It is generally hard to
 verify that a  set $T$ is geodesically convex,
even if $\{T_\sigma\}_{\sigma \in \Omega}$ is a polyhedral complex.

\begin{example}[A Geodesic Triangle] \rm
Let $\Omega$ be the $2$-dimensional flag complex with
facets $123$, $234$, $345$ and $456$.
We consider eight points in the 
$3$-dimensional orthant space $\mathcal{F}_\Omega \subset \RR^6$:
$$ \begin{matrix}
\,\,a=(4, 6, 6, 0, 0, 0),\,\, b=(0, 5, 8, 0, 0, 0), \,\,c=(0, 0, 0, 1, 2, 3),\,\,\,
d=(0, 0, \frac{1}{7}, \frac{5}{7}, 0, 0),  \\
 e=(0, 0, 0, \frac{8}{11}, \frac{1}{11}, 0),\, f=(0, 0, 0, \frac{14}{25}, 0, 0),\,
h=(0, \frac{14}{19}, \frac{14}{19}, 0, 0, 0), \,\,
o=(0, 0, 0, 0, 0, 0). \end{matrix} $$
The geodesic triangle $T  = {\rm gconv}(a,b,c)$
is a $3$-dimensional polyhedral complex. Its
maximal cells are 
three tetrahedra $T_{123} = {\rm conv}\{a,b,h,o\}$,
$\,T_ {345} = {\rm conv}\{d,e,f,o\}$,
$T_{456} = {\rm conv}\{c,e,f,o\}$,
and one bipyramid
$ T_{234} = {\rm conv}\{b,d,f,h,o\}$.
These four $3$-polytopes are attached along the triangles
$T_{23} = {\rm conv}\{b,h,o \}$,
$T_{34} = {\rm conv}\{ d,f,o\}$, and
$T_{45} = {\rm conv}\{ e,f,o\}$.

Verifying this example amounts to a non-trivial computation.
We first check that $d,e,f,h,o$ are in the geodesic 
convex hull of $a,b,c$.
This is done by computing 
geodesic segments as in Section \ref{sec2}. We find
$G(a,c) = [a,o] \cup [o,c]$
and $G(b,c) = [b,d] \cup [d,e] \cup [e,c]$.
Next, 
$G(a,e)=[a, x]\cup [x, y]\cup [y, e]$ where $x=(0, \frac{11}{13}, \frac{12}{13}, 0, 0, 0)$ and
 $y=(0, 0, \frac{6}{67}, \frac{44}{67}, 0, 0)$. 
 If we now take $s=\frac{9}{22}b+\frac{13}{22}x$
 in $\mathcal{O}_{23}$ then   $G(s, c)=[s, f]\cup [f, c]$,
 and finally $G(a,f) = [a,h] \cup [h,f]$.
Now, we need to show that the union of the four
$3$-polytopes is geodesically convex.    
For any two points $v$ and $w$ from distinct polytopes
we must show that $G(v,w)$ remains in that union.
Here, it does not suffice to take vertices. 
This is a  quantifier elimination problem
in piecewise-linear algebra, and we are proud to report
that we completed this computation.
\end{example}

The following theorem is the main result in this section.

\begin{theorem} \label{thm:highdim}
Let $d$ be a positive integer.
There exists an orthant space of dimension $2d$
and three points in that space such that their geodesic triangle
contains a $d$-dimensional simplex.
\end{theorem}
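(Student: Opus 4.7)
My plan is constructive: for each positive integer $d$, I would exhibit a flag simplicial complex $\Omega$ whose orthant space $\mathcal{F}_\Omega$ has dimension $2d$, together with three points $a, b, c \in \mathcal{F}_\Omega$, and then identify $d+1$ affinely independent points that lie inside a single orthant of $\mathcal{F}_\Omega$ and all belong to ${\rm gconv}(a, b, c)$. Since the intersection of a geodesically convex set with an orthant is classically convex (as observed just before Example~\ref{ex:threepages}), the Euclidean convex hull of those $d+1$ points is a $d$-simplex contained in the geodesic triangle, which proves the theorem.

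For the construction, I would generalize the preceding $3$-dimensional example by taking $\Omega$ to be the flag complex on the vertex set $\{1, 2, \ldots, 4d\}$ whose maximal simplices are the sliding windows $F_k = \{k, k+1, \ldots, k+2d-1\}$ for $k = 1, \ldots, 2d+1$. Adjacent facets share a codimension-one face, and $\mathcal{F}_\Omega$ is a $2d$-dimensional orthant space. I would place $a$ in the interior of $\mathcal{O}_{F_1}$ and $c$ in the interior of $\mathcal{O}_{F_{2d+1}}$, choosing coordinates so that the ordered partition $(\mathcal{A}, \mathcal{B})$ certifying $G(a, c)$ via Theorem~\ref{thm:owenprovan} has $d$ parts with strictly increasing projection-length ratios; the geodesic then visits $d+1$ distinct orthants in sequence. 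With $b$ placed so that both $G(a, b)$ and $G(b, c)$ pass through a chosen central orthant $\mathcal{O}_{F_{d+1}}$, iterating geodesics as in Lemma~\ref{lem:moreandmore} starting from $\{a, b, c\}$ produces new points inside $T_{F_{d+1}} = {\rm gconv}(a, b, c) \cap \mathcal{O}_{F_{d+1}}$ whose coordinates depend linearly, via the breakpoint formulas (\ref{bp1})--(\ref{bp3}), on parameters labeling intermediate points along previously constructed geodesics. Tuning these parameters would allow $d+1$ such iterated points to be affinely independent in $\mathcal{O}_{F_{d+1}}$, yielding the desired $d$-simplex.

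The principal obstacle is combinatorial stability in the Owen-Provan algorithm: as we iterate and move endpoints along previously constructed geodesics, the optimal ordered partition $(\mathcal{A}, \mathcal{B})$ can change combinatorial type across walls in parameter space, which destroys the piecewise-linear picture needed for the dimension count and can collapse affine dimensions. I would address this by choosing the coordinates of $a, b, c$ and the prescribed projection-length ratios generically, so that the conditions {\bf (P1)}, {\bf (P2)}, {\bf (P3)} remain strict across an open range of the iteration parameters. Once combinatorial stability holds on such an open range, establishing that $d+1$ of the iteratively constructed points are affinely independent in $\mathcal{O}_{F_{d+1}}$ reduces to the non-vanishing of an explicit determinant whose entries are rational functions of the prescribed ratios; this non-vanishing can be arranged by genericity, and verifying it is the most delicate remaining step.
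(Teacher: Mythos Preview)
Your simplicial complex $\Omega$ and your placement of $a\in\mathcal{O}_{F_1}$, $c\in\mathcal{O}_{F_{2d+1}}$ match the paper exactly, and the overarching strategy---manufacture $d+1$ affinely independent points in a single orthant of ${\rm gconv}(a,b,c)$---is the right one. But two choices in your plan diverge from the paper and create real gaps.

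First, your claim that the iterated breakpoint coordinates depend \emph{linearly} on parameters along earlier geodesics is false: formulas (\ref{bp1})--(\ref{bp3}) involve the Euclidean norms $\|u^{i-1}_{A_i}\|$, $\|w_{B_i}\|$, which are square roots of quadratics in those parameters. Within a fixed combinatorial type the dependence is rational in the parameters and the norms jointly, not linear, so your dimension count cannot be reduced to a linear-algebra statement in the way you suggest. Second, and more seriously, you defer the entire content of the theorem to a genericity claim (``non-vanishing of an explicit determinant\ldots arranged by genericity''). Nothing rules out that determinant being \emph{identically} zero for structural reasons---for example, if all iterated points you construct happen to lie on a lower-dimensional algebraic subvariety of $\mathcal{O}_{F_{d+1}}$ regardless of parameter choices. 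Your proposal gives no mechanism that forces the affine span to grow with each iteration.

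The paper avoids both issues by a different and simpler placement of $b$: it puts $b$ in the \emph{same} orthant $\mathcal{O}_0=\mathcal{O}_{F_1}$ as $a$ (not in a separate orthant so that $G(a,b)$ and $G(b,c)$ meet in the middle), and it targets $\mathcal{O}_0$, not the central orthant, for the $d$-simplex. With $a$ chosen so that $G(a,c)$ has exactly $d$ breakpoints $u^1,\ldots,u^d$, the paper takes the first breakpoint $\widetilde u^i$ of each $G(b,u^i)$ on the boundary of $\mathcal{O}_0$; this is a single, non-iterated construction. Crucially, the coordinates of $a,b,c$ are chosen \emph{explicitly} (e.g.\ $a=(7,1,14,2,21,3,\ldots)$), the $u^i$ and $\widetilde u^i$ are computed in closed form, and the $(d{+}1)\times(d{+}1)$ minor certifying affine independence of $a,\widetilde u^1,\ldots,\widetilde u^d$ is evaluated directly. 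No genericity is invoked. If you pursue your route, you would need at minimum to specify the orthant containing $b$, exhibit a concrete finite sequence of geodesic constructions landing $d{+}1$ points in $\mathcal{O}_{F_{d+1}}$, and give a reason (not just a hope) why the resulting determinant is nonzero.
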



\begin{proof}
Fix the simplicial complex $\Omega$ on the vertex set $\{1,2,\ldots,4d\}$
whose $2d+1$ facets are $\{i+1,i+2,\ldots,i+2d \}$ for $i=0,1,\ldots,2d$.
This simplicial complex is flag because the minimal non-faces are the 
$\binom{2d+1}{2}$ pairs $\{i,j\}$ for $j-i \geq 2d$.
The corresponding orthant space ${\cal F}_{\Omega}$ has dimension $2d$.
We here denote the maximal orthants in ${\cal F}_\Omega$ by
$\mathcal{O}_i = {\rm pos}\{ e_{i+1},e_{i+2}, \ldots,e_{i+2d}\}$.

For each positive integer $i$, we define an integer
$v_i$ as follows. 
We set $v_i = \frac{i}{2}$  if $i$ is even and $v_i =\frac{7(i+1)}{2}$ if $i$ is odd.
We fix the points
$a = \sum_{i=1}^{2d} v_i e_i$ and
$b = \sum_{i=1}^{2d} e_i$ in the first orthant $\mathcal{O}_0$
and the point $c = \sum_{i=1}^{2d} e_{2d+i}$ in 
the last orthant $\mathcal{O}_{2d}$.  Explicitly, 
$$ \begin{array}{lll}
a &=&  (\,7, 1, 14, 2, 21, 3, 28, 4, \,\ldots, \,
  0, 0, 0, 0, 0, 0, 0, 0,  \ldots ),
  \\
b &=&  (\,1, \,1, \,1, \,1, \,1, \,1, \,1,\, 1, \,\,\ldots, \,
 0, 0, 0, 0, 0, 0, 0, 0, \ldots ), \\
c  &=& (\,0,\, 0,\, 0,\, 0,\, 0, \,0,\, 0,\, 0, \, \ldots\,, \,
 1,1,1,1,1,1,1,1,\,\ldots).
\end{array}
$$
Consider the geodesic triangle 
${\rm gconv}(a,b,c)$ in the orthant space ${\cal F}_{\Omega}$. 
We shall construct a simplex $P$ of dimension $d$ that
 is contained in the convex set ${\rm gconv}(a,b,c) \cap  \mathcal{O}_0$.
 
We begin with the geodesic segment ${G}(a,c)$. The pair of ordered partitions is 
\begin{equation}
 \label{eq:partitionsAC} 
\begin{array}{lllll}
 \bigl( ( \{1,2\}, &\{3,4\}, & \ldots, &\{2d-1,2d\} ), &\, \\
   \;\;(\{2d{+}1,2d{+}2\},& \{2d{+}3,2d{+}4\}, & \ldots, & \{4d-1,4d\} ) \bigr).
\end{array}
\end{equation}
We write the corresponding decompositions into classical line segments as follows:
$$
\begin{matrix}
G(a,c) & = & [a,u^1] \,\cup\, [u^1,u^2] \,\cup\, \cdots \,\cup \,[u^{d-1}, u^d] 
\cup [u^d, c]. \\
\end{matrix}
$$
Each $u^i$, for $1 \leq i \leq d$,
 lies in the relative interior of an orthant of dimension $2d-2$:
 $$
\begin{matrix}
u^i~ \in~ {\rm pos} \{e_{2i-1},\ldots, e_{2d + 2i -2} \} \cap {\rm pos} \{e_{2i+1},\ldots, e_{2d + 2i} \}~ =~ {\rm pos} \{e_{2i+1},\ldots, e_{2d + 2i -2} \} \\
\end{matrix}.
$$
We now consider the geodesic segments
${G}(b,u^i)$
for $i = 1,2,\ldots,d$.
Let $\widetilde u^i$ denote the
unique intersection point of these geodesic segments
with the boundary of $\mathcal{O}_0$.
Note that $\widetilde u^1 = u^1$.
The points $a,\widetilde u^1, \ldots, \widetilde u^d$ 
lie in the   orthant $\mathcal{O}_0 \simeq \RR^{2d}_{\geq 0}$.
By construction, they are also contained in the geodesic triangle
${\rm gconv}(a,b,c) $. We shall prove that they are affinely independent.

The above point $u^i$ 
 can be written as  $\sum_{k=2i-1}^{2d+ 2i -2} u_k^i e_k$ or
as $\sum_{k=2i+1}^{2d+ 2i} u_k^i e_k$. Note that $u_k^i=0$ for  $k =2i{-}1, 2i, 2d{+}2i{-}1, 2d{+}2i$. For 
$2i+1\leq k\leq 2d+2i-2$, 
we
claim: 
\begin{eqnarray}    
	u^{i}_{2j-1}~=~\frac{7(j-i)}{5i+1},\;\;\;\; 
	u^{i}_{2j}&=&\frac{j-i}{5i+1},\;\;\;\;\text{for}\;i+1\leq j\leq d;\; \label{eq:firstnodes}\\
	u^{i}_{2d+2j-1}~=~u^{i}_{2d+2j}&=&\frac{5(i-j)}{5i+1},\;\;\text{for}\;1\leq j\leq i-1. \label{eq:lastnodes}
\end{eqnarray}

\noindent
To prove the above claim, it suffices to verify that the proposed $u^i$ satisfy
\[\sum_{i=0}^{d}{d(u^{i},u^{i+1})}~=~d(a,c),\;\; \text{where}\;u^0=a,u^{d+1}=c.\]
In fact, by (\ref{eq:optimallength}) we have 
\[d(a,c)~=~\sqrt{\sum_{i=1}^{d}{(5i\sqrt{2}+\sqrt{2})^2}}~=~\sqrt{2 S_d}
\qquad \hbox{where} \quad S_d = \sum_{i=1}^{d}{(5i+1)^2}. \]
For $0\le i\le d-1$, we have 
\begin{eqnarray*}
d(u^{i},u^{i+1})
&=&\sqrt{\sum_{k=2i+1}^{2d+2i}{(u^{i}_{k}-u^{i+1}_{k})^2}} \\
&=&\sqrt{\sum_{j=0}^{d-i-1}{(7^2+1^2)(\frac{j+1}{5i+1}-\frac{j}{5i+6})^2}+\sum_{j=0}^{i-1}{2(\frac{5j}{5i+1}-\frac{5j+5}{5i+6})^2}} \\
&=&\frac{\sqrt{50}}{(5i+1)(5i+6)}\sqrt{\sum_{j=i+1}^{d}{(5j+1)^2}+\sum_{j=1}^{i}{(5j+1)^2}} \\
&=&\frac{5}{(5i+1)(5i+6)}\sqrt{2S_d}.
\end{eqnarray*}
The sum of these $d$ quantities simplifies to $\,\bigl(1-\frac{1}{5d+1}\bigr)\sqrt{2S_d}$.
We next observe that
\begin{eqnarray*}
d(u^d,c)&=&\sqrt{\sum_{k=2d+1}^{4d}{(u^d_{k}-c_{k})^2}} 
~=~\sqrt{2\sum_{j=0}^{d-1}{(\frac{5j}{5d+1}-1)^2}} 
~=~\frac{\sqrt{2S_d}}{5d+1}.
\end{eqnarray*}
By adding this to the previous sum, we obtain $\sqrt{2S_d}=d(a,c)$.
So, the claim is proved.

Next, we compute $\widetilde u^i$ for $1\le i\le d$.
Recall $\widetilde u^i$ is the unique intersection point of the geodesic segment
${G}(b,u^i)$ with ${\rm pos} \{e_3, \ldots, e_d\}$. 
We note two facts about ${G}(b,u^i)$:
\begin{itemize}
\item[(F1)]~The common coordinates of $b$ and $u^i$ are $e_{2i-1}, \ldots, e_{2d}$. 
\item[(F2)]~By equation (\ref{eq:lastnodes}), the pair of ordered partitions determining $G(b, u^i)$ is 
\begin{equation*}
 \label{eq:partitionsABC} 
\begin{array}{llllll}
 \bigl( \,(\{1\}, & \{2\},  &\ldots, &\{2i-2\} ),&\, \\
       \; \;\;(\{2d{+}1\}, &\{2d{+}2\}, & \ldots,& \{2d+2i-2\} ) \bigr).
\end{array}
\end{equation*}
\end{itemize}
Suppose $\widetilde u^i=\sum_{k=1}^{2d} \widetilde u^i_k e_k$.  For $1\leq k\leq 2i-2$, we 
compute $\widetilde u^i_k$ by (\ref{bp2}) and (\ref{eq:firstnodes}). For $2i-1\leq k\leq 2d$, 
we compute $\widetilde u^i_k$ by 
 (\ref{bp3}) and (\ref{eq:lastnodes}). Then we obtain $\widetilde u^i_k$ as follows:
 
\begin{equation}
	\widetilde u^{i}_{2j}=
	\begin{cases}
	\frac{j+4i-5}{10i-4}, &\text{ if } i+1\le j\le d;\\
	\frac{5j-5}{10i-4}, &\text{ if } 1\le j\le i\\
	\end{cases}
\end{equation}
and
\begin{equation}
	\widetilde u^{i}_{2j-1}=
	\begin{cases}
		\frac{7j-2i-5}{10i-4}, &\text{ if } i+1\le j\le d;\\
		\frac{5j-5}{10i-4}, &\text{ if } 1\le j\le i.\\
	\end{cases}
\end{equation}

The  $d+1$ points
$\,\widetilde u^0  = a,\widetilde u^1, \ldots, \widetilde u^d\,$ are contained in
$\mathcal{O}_0 \simeq \RR^{2d}$ also in our geodesic triangle.
To complete the proof of Theorem \ref{thm:highdim}, we will now show that
they are affinely independent, so their convex hull is a $d$-simplex.
 Consider the
$(d+1)\times (2d+1)$ matrix $U$, whose $(i+1)$-th row is the 
vector of homogeneous coordinates of $\widetilde u^i$.  
We must  show that $U$ has rank $d+1$. 
Let $U'$ be the integer matrix obtained from $U$ by multiplying
each row by the denominator $10i-4$.
Let $U''$ be the $(d+1)\times (d+1)$ submatrix of $U'$ formed by the $2,4,\ldots,2d$-th and $(2d+1)$-th columns of $U'$. We apply elementary  column operators to $U''$ to obtain 
a triangular form. From this, we find that
$|\det(U'')|=4^{d-3}|10d-34|\ne 0$. This means that $U''$ has rank $d+1$,
and hence so do the rectangular matrices $U'$ and $U$. 

For an example take $d=5$.
The matrix representing
 $a, \widetilde u^1, \widetilde u^2 ,\widetilde u^3, \widetilde u^4, \widetilde u^5
 $ in $\RR^{10}$ is
 $$ \!\!
U~=~\left[ \begin {array}{ccccccccccc} 
7&1&14&2&21&3&28&4&35&5&1 \\ 
\noalign{\medskip}0&0&\frac{7}{6}&\frac{1}{6}&\frac{14}{6}&\frac{2}{6}&\frac{21}{6}&\frac{3}{6}&\frac{28}{6}&\frac{4}{6}&1
\\ 
\noalign{\medskip}0&0&\frac{5}{16}&\frac{5}{16}&\frac{12}{16}&\frac{6}{16}&\frac{19}{16}&\frac{7}{16}&\frac{26}{16}&\frac{8}{16}&1
\\  
\noalign{\medskip}0&0&\frac{5}{26}&\frac{5}{26}&\frac{10}{26}&\frac{10}{26}&\frac{17}{26}&\frac{11}{26}&\frac{24}{26}&\frac{12}{26}&1
\\
\noalign{\medskip}0&0&\frac{5}{36}&\frac{5}{36}&\frac{10}{36}&\frac{10}{36}&\frac{15}{36}&\frac{15}{36}&\frac{22}{36}&\frac{16}{36}&1
\\ 
\noalign{\medskip}0&0&\frac{5}{46}&\frac{5}{46}&\frac{10}{46}&\frac{10}{46}&\frac{15}{46}&\frac{15}{46}&\frac{20}{46}&\frac{20}{46}&1
\end {array} \right]_{6\times 11}.
$$
This matrix has rank $6$, so its columns form a $5$-simplex.
 Hence, the geodesic triangle spanned
by the points $a,b,c$ in the orthant space ${\cal F}_{\Omega}$ 
has dimension at least $5$.
\end{proof}

A nice feature of the construction above is that it extends to
 BHV tree space.
The geodesic convex hull of three equidistant trees
can have arbitrarily high dimension.

\begin{corollary}  \label{cor:treebig}
There exist three ultrametric trees with $2d+2$ leaves
whose geodesic triangle in ultrametric BHV space 
$\mathcal{U}_{2d+2}$ has dimension at least $d$.
\end{corollary}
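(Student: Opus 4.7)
The plan is to embed the orthant space $\mathcal{F}_\Omega$ used in the proof of Theorem \ref{thm:highdim} isometrically into $\mathcal{U}_{2d+2}$ so that the geodesics computed there are preserved, and then to take $\tilde a, \tilde b, \tilde c$ to be the images of the three points $a, b, c$. Concretely, I would set $m=2d+2$ and identify the $j$-th vertex of $\Omega$ with a clade $\sigma_j \subset [m]$. What is required is: (i) each $F_i = \{\sigma_{i+1},\ldots,\sigma_{i+2d}\}$, for $i=0,\ldots,2d$, is a maximal nested set on $[m]$, and (ii) $\sigma_j$ and $\sigma_k$ fail to be nested whenever $|j-k| \ge 2d$. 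Together, (i) and (ii) say that the nested-set subcomplex of $\mathcal{U}_{2d+2}$ supported on $\{\sigma_1,\ldots,\sigma_{4d}\}$ equals $\Omega$ as a flag complex.

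A natural candidate is a ``sliding caterpillar'': take $\sigma_j = \{1,\ldots,j+1\}$ for $1 \le j \le 2d$ and $\sigma_{2d+k} = \{2,\ldots,k+2\}$ for $1 \le k \le 2d$. The corresponding $2d+1$ rooted binary trees $T_0,\ldots,T_{2d}$ form a path in the NNI graph, obtained by sliding leaf $1$ outward one step at a time. For (i), $F_i$ is the union of the chain $[1,i+2]\subset\cdots\subset [1,2d+1]$ and the chain $[2,3]\subset\cdots\subset [2,i+2]$, and every ``short'' clade of the second chain sits inside every ``long'' clade of the first. For (ii), a pair at index-distance $\ge 2d$ is of the form $\sigma_j=[1,j+1]$ and $\sigma_k=[2,k-2d+2]$ with $j \le k-2d$; both contain $2$, yet $1 \in \sigma_j \setminus \sigma_k$ and $k-2d+2 \ge j+2$, so neither clade contains the other, hence they are incompatible.

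With (i) and (ii) established, the piecewise-linear map $\iota: \mathcal{F}_\Omega \to \mathcal{U}_{2d+2}$ sending $e_j$ to $D_{\sigma_j}$ is an isometry onto its image. Setting $\tilde a = \iota(a)$, $\tilde b = \iota(b)$, $\tilde c = \iota(c)$, I would check that each geodesic used in the proof of Theorem \ref{thm:highdim}, namely $G(a,c)$ and the $G(b,u^i)$ for $i=1,\ldots,d$, is carried by $\iota$ to a genuine geodesic of $\mathcal{U}_{2d+2}$. The Owen--Provan algorithm (Theorem \ref{thm:owenprovan}) optimizes only over ordered partitions of the two endpoint supports, and its conditions {\bf (P1)} and {\bf (P3)} require that certain unions be simplices of the ambient flag complex. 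By (ii) such a union is a simplex of the nested-set complex of $\mathcal{U}_{2d+2}$ if and only if it is a simplex of $\Omega$, while {\bf (P2)} and all projection lengths are preserved by $\iota$. Hence the optimal pair $(\mathcal{A},\mathcal{B})$, the breakpoints $u^i$, and in particular the affinely independent points $\widetilde u^0,\widetilde u^1,\ldots,\widetilde u^d$ inside $\iota(\mathcal{O}_0)$ all transfer verbatim to $\mathcal{U}_{2d+2}$.

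The principal obstacle is precisely condition (ii): one has to rule out every accidental nesting between a ``short'' and a ``long'' clade at index-distance $\ge 2d$, since any such compatibility would let the Owen--Provan algorithm in $\mathcal{U}_{2d+2}$ exploit a shorter routing through an extra orthant and disturb the breakpoints. Once (ii) is verified, the affine independence proved in Theorem \ref{thm:highdim} is inherited, yielding a $d$-dimensional simplex inside ${\rm gconv}(\tilde a,\tilde b,\tilde c)\subset \mathcal{U}_{2d+2}$, which proves the corollary.
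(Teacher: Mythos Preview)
Your proposal is correct and follows essentially the same route as the paper: both arguments embed $\mathcal{F}_\Omega$ into $\mathcal{U}_{2d+2}$ by realizing the $4d$ vertices of $\Omega$ as a ``sliding caterpillar'' sequence of clades on $[2d+2]$, verify that consecutive blocks of $2d$ clades are nested while pairs at index-distance $\ge 2d$ are incompatible, and conclude that the induced subcomplex is exactly $\Omega$ so the $d$-simplex from Theorem~\ref{thm:highdim} transfers. The only difference is cosmetic: the paper takes the second chain to be $\{1,3\},\{1,3,4\},\ldots,\{1,3,4,\ldots,2d{+}2\}$ whereas you take $\{2,3\},\{2,3,4\},\ldots,\{2,\ldots,2d{+}2\}$; both choices satisfy (i) and (ii), and your explicit invocation of the Owen--Provan conditions to justify that geodesics stay in the image is a point the paper leaves implicit.
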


\begin{proof}
Consider the following sequence of $4d$ clades
on the set $[2d+2]$. Start with the $2d$ clades
$\{1,2\}, \{1,2,3\}, \{1,2,3,4\}, \ldots, \{1,2,3,\ldots,2d+1\}$.
Then continue with the $2d$ clades
$\{1,3\}, \{1,3,4\}, \ldots, \{1,3,4,5,\ldots,2d+2\}$.
For instance, for $d=5$, our sequence equals
$$ \begin{array}{ll}
&\{1,2\},
\{1,2,3\},
 \{1,2,3,4\},
 \{1,2,3,4,5\},
 \dots,
 \{1,2,3,4,5, 6, 7, 8, 9, 10, 11\} ,\\
&  \{1,3\},
   \{1,3,4\},
    \{1,3,4,5\},
    \{1,3,4,5,6\},
    \dots,
     \{1,3,4,5,6,7, 8, 9, 10, 11, 12\}.
     \end{array} 
 $$
 
     In this sequence of $4d$ clades, every collection
of $2d$ consecutive splits is compatible
and  forms a trivalent caterpillar tree.
No other pair is compatible.
Hence the induced subfan of the tree space $\mathcal{U}_{2d+2}$
is identical to the orthant space ${\cal F}_{\Omega}$ in the proof above.
The two spaces are isometric.
Hence our high-dimensional geodesic triangle exists also in tree space
$\mathcal{U}_{2d+2}$.
\end{proof}

\section{Tropical Convexity}
\label{sec4}

In this section we shift gears, by turning to tropical convexity.
We shall assume that the reader is familiar with basics of
tropical geometry \cite{MS}. We here use
the max-plus algebra, so our convention is opposite to that of  \cite{MS, PS}.
The connection between phylogenetic trees and tropical lines,
identifying tree space with a tropical Grassmannian, has
been explained in many sources, including 
\cite[\S 4.3]{MS} and \cite[\S 3.5]{PS}.
However, the restriction to
ultrametrics \cite[\S 4]{AK} offers a fresh perspective.
From that vantage point, the discussion of tree mixtures
at the end of \cite[\S 3.5]{PS} seems to be misleading.
We posit here:~{\em mixtures of trees are trees!}

Let $L_m$ denote the cycle space of the complete  graph
on $m$ nodes with $e := {m \choose 2}$ edges.  
 This is the $(m-1)$-dimensional subspace  of $\RR^{e}$ defined
by the linear equations $\,x_{ij} - x_{ik} + x_{jk} = 0\,$
for $1 \leq i < j < k \leq m$.
The tropicalization of the linear space $L_m$ is
the set of points  $D = (d_{ij})$ such that
the following maximum is attained at least twice
for all triples $i,j,k$:
$$ d_{ij} \oplus d_{ik} \oplus d_{jk} \, = \,  {\rm max}(d_{ij}, d_{ik},d_{jk}). $$
Disregarding nonnegativity constraints and
triangle inequalities, points in ${\rm Trop}(L_m)$ are
precisely the ultrametrics on $[m]$;
see \cite[Theorem 3]{AK} and \cite[Example 4.2.14]{MS}.

As is customary in tropical geometry, we work in the quotient space
$\RR^{e}/\RR {\bf 1}$, where ${\bf 1} = (1,1,\ldots,1)$. 
The images of ${\rm Trop}(L_m)$
and $\mathcal{U}_m$ in that space are equal. Each
point in that image has a unique representative
whose coordinates have tropical sum
$\, d_{12} \oplus d_{13} \oplus \cdots \oplus d_{m-1,m}
= {\rm max}(d_{12},d_{13} \ldots,d_{m-1,m}) \,$
equal to $1$.
Thus, in $\RR^{e}/\RR {\bf 1}$,
$$ \mathcal{U}_m^{[1]} \,\, \subset \,\,
\mathcal{U}_m \, = \, {\rm Trop}(L_m). $$

Given two elements $D$ and $D'$ in 
$\RR^{e}$, their {\em tropical sum}
$D \oplus D'$ is the coordinate-wise maximum. A subset of $\RR^{e}$
is {\em tropically convex} if it is closed under the
tropical sum operation. The same definitions apply
to elements and subsets of  $\,\RR^{e}/\RR {\bf 1}$.

\begin{proposition}
The tree space $\mathcal{U}_m$
is a tropical linear space and is hence tropically convex.
The compact tree space $\,\mathcal{U}_m^{[1]}$
is a tropically convex subset.
\end{proposition}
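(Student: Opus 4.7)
My plan is to treat the two claims separately. The first requires essentially no work beyond unpacking what has already been said: the preceding discussion identifies $\mathcal{U}_m$ with ${\rm Trop}(L_m)$ via \cite[Theorem 3]{AK} and \cite[Example 4.2.14]{MS}, and since $L_m$ is a genuine linear subspace of $\RR^e$, its tropicalization is a tropical linear space by definition. Tropical convexity then follows from \cite[Proposition 5.2.8]{MS}, already cited in Section~\ref{sec1}.

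For the compact tree space $\mathcal{U}_m^{[1]}$, I would verify closure directly. Pick two points of $\mathcal{U}_m^{[1]}$ and choose their canonical representatives $D, D' \in \RR^e$: nonnegative ultrametrics with $\max_{ij} d_{ij} = \max_{ij} d'_{ij} = 1$. These representatives exist, and are unique, by the definition of $\mathcal{U}_m^{[1]}$. For any shifts $a, b \in \RR$, the tropical combination $E := \max(D + a{\bf 1}, D' + b{\bf 1})$ represents a point of $\mathcal{U}_m$ by the first claim. Normalizing so that $\max(a, b) = 0$, say $b = 0$ and $a \leq 0$, we have $E \geq D' \geq 0$ coordinatewise and $\max_{ij} E_{ij} = \max(1 + a, 1) = 1$. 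Hence $E$ is a nonnegative ultrametric with maximum entry $1$, so its class lies in $\mathcal{U}_m^{[1]}$.

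The argument is essentially an unpacking of definitions, and I foresee no genuine technical obstacle. The only mildly subtle point is interpreting tropical convexity in the projective quotient: tropical scalar multiplication is trivial modulo $\RR{\bf 1}$, so tropical convexity of a subset of $\RR^e/\RR{\bf 1}$ reduces to closure under the family of operations $(D, D') \mapsto \max(D + a{\bf 1}, D' + b{\bf 1})$ applied to representatives, which is precisely what the normalization $\max(a, b) = 0$ above controls.
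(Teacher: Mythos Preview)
Your proposal is correct and follows essentially the same route as the paper. For the first claim both you and the paper simply invoke the identification $\mathcal{U}_m = {\rm Trop}(L_m)$ together with \cite[Proposition~5.2.8]{MS}. For the second claim the paper phrases the computation in terms of the translation-invariant condition $|a_i - a_j| \leq 1$ for all $i,j$ and checks four cases for $a \oplus b$, whereas you fix the canonical representatives with maximum coordinate $1$ and handle the tropical scalars explicitly via the normalization $\max(a,b)=0$; these are two packagings of the same elementary inequality, and your version is arguably a touch cleaner since it avoids the case split.
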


\begin{proof}
We saw that $\mathcal{U}_m  = {\rm Trop}(L_m)$ is a tropical linear space,
so it is tropically convex by \cite[Proposition 5.2.8]{MS}.
We show that its subset $\mathcal{U}_m^{[1]}$ is closed under  tropical sums.
Suppose  two real vectors $a = (a_1,\ldots,a_e)$ and
$b = (b_1,\ldots,b_e)$ satisfy $|a_i - a_j| \leq 1$ and $|b_i - b_j| \leq 1$
for all $i,j$. Then the same holds for their tropical sum $a \oplus b$.
Indeed, let $a_i \oplus b_i$ be the largest coordinate of $a \oplus b$
and let $a_j \oplus b_j$ be the smallest. There are four cases as
to which attains the two maxima given by $\oplus$.
 In all four cases, one easily checks that
$|(a_i \oplus b_i) - (a_j \oplus b_j) |\leq 1$.
\end{proof}

We briefly recall some basics from tropical convexity \cite[\S 5.2]{MS}.
A {\em tropical segment} is the tropical convex hull 
${\rm tconv}(u,v)$ of two 
points $u = (u_1{, }u_2{, }\ldots{, }u_e)$ and 
$v = (v_1{, }v_2{, }\ldots {, }v_e)$ in $\RR^e/\RR {\bf 1}$.
It is the concatenation of at most $e-1$ ordinary line segments,
with slopes in $\{0,1\}^e$. Computing that segment 
involves sorting the coordinates of $u-v$, so it is done in
time $O(e \cdot {\rm log}(e))$.  This algorithm is described in the proof
of \cite[Proposition 5.2.5]{MS}.

A {\em tropical polytope} $\mathcal{P} = {\rm tconv}(\mathcal{S})$ is the tropical 
convex hull of a finite set $\mathcal{S}$ in  $\RR^e/\RR {\bf 1}$.
This is a classical polyhedral complex of dimension at most $ |\mathcal{S}| - 1$.

If $\mathcal{D}$ is a real $s \times e$-matrix then the tropical convex hull of 
its $s$ rows is a tropical polytope in $\RR^e/\RR {\bf 1}$.
The tropical convex hull of its $e$ columns is a tropical polytope
in $\RR^s/\RR {\bf 1}$. It is a remarkable fact \cite[Theorem 5.2.21]{MS}
that these two tropical polytopes are identical.
We write  ${\rm tconv}(\mathcal{D})$ for that common object.
Example~\ref{ex:threebyten} illustrates this for
a $3 {\times} 10$-matrix $\mathcal{D}$. Here, ${\rm tconv}(\mathcal{D})$
has dimension $2$ and is shown in Fig.~\ref{fig:threebyten}.

\begin{example} \label{ex:TropTriangle} \rm
\label{ex:threebyten} We compute the tropical convex hull
${\rm tconv}(\mathcal{D})$ of the $3 \times 10$-matrix
$$
{\small
\mathcal{D} \,\, = \,\,
\begin{pmatrix}
 77/100 & 1 &  21/25 &     1 &     1 &  21/25 &   1 &     1 &  13/25 &     1  \\
      1 & 1 &     1 &     1  &   8/25 &    3/4 & 3/4 &   3/4 &   3/4 & 23/50  \\
     1 & 1 &      1 & 49/50 & 16/25 & 16/25 &   1 & 3/100 &     1 &     1 
\end{pmatrix}.
}
$$

\begin{figure}[h!]
  \begin{center}
    \includegraphics[width=8cm]{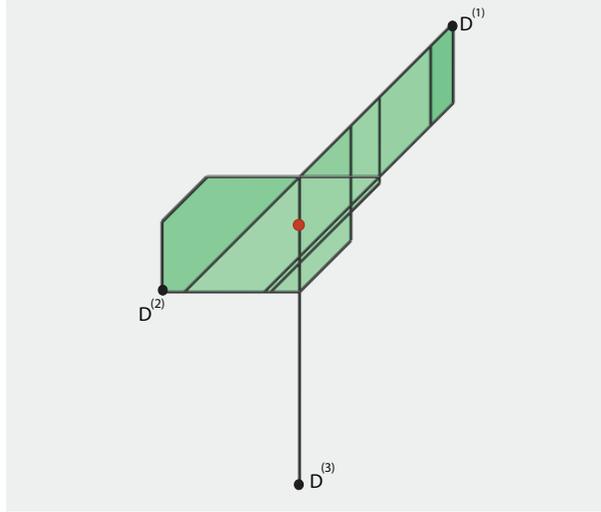}
  \end{center}
  \caption{\label{fig:threebyten} The tropical triangle
  formed by three equidistant trees  on five taxa.    } \end{figure}
  
This is  the tropical convex hull
of the $10$ points in the plane $\RR^3/\RR {\bf 1}$
represented by the column vectors. Its type decomposition
is a $2$-dimensional polyhedral complex with 
$23$ nodes, $35$ edges and $13$ two-dimensional cells. 
It is shown in Fig.~\ref{fig:threebyten}.
We can also regard ${\rm tconv}(\mathcal{D})$ as a tropical triangle
in $\RR^{10}/\RR {\bf 1}$, namely as the tropical
convex hull of the row vectors.
The three rows of $\mathcal{D}$ are ultrametrics
$(d_{12},d_{13},d_{14},d_{15},d_{23},d_{24},d_{25}, d_{34}, d_{35},
d_{45})$, i.e.~points in $\mathcal{U}_5$.   We denote the three row vectors of 
the matrix $\mathcal{D}$ by $D^{(1)}$, $D^{(2)}$ and $D^{(3)}$, respectively. 

\begin{table}[t]
\begin{center}
\setcounter{MaxMatrixCols}{12}
{\tiny
$$
\begin{matrix}
  d_{12} & d_{13} & d_{14} & d_{15} & d_{23} & 
  d_{24} & d_{25} & d_{34} & d_{35} & d_{45} & {\rm nested} \,\,{\rm set} \\
 77/100 & 1 & 21/25 & 1 & 1 & 21/25 & 1 & 1 & 13/25 & 1 & \{12,35,124\}  \bullet  \\
 21/25 & 1 & 21/25 & 1 & 1 & 21/25 & 1 & 1 & 59/100 & 1 & \{35,124\}  \\
 1 & 1 & 1 & 1 & 1 & 21/25 & 1 & 1 & 3/4 & 1 & \{24,35\}  \\
 1 & 1 & 1 & 1 & 91/100 & 3/4 & 91/100 & 91/100 & 3/4 & 91/100 & \{24,35,2345\}  \\
 1 & 1 & 1 & 1 & 3/4 & 3/4 & 3/4 & 3/4 & 3/4 & 3/4 & \{2345\}    \\
 1 & 1 & 1 & 1 & 23/50 & 3/4 & 3/4 & 3/4 & 3/4 & 23/50 & \{23,45,2345\}   \\
 1 & 1 & 1 & 1 & 8/25 & 3/4 & 3/4 & 3/4 & 3/4 & 23/50 & \{23,45,2345\} \bullet  \\
 1 & 1 & 1 & 1 & 8/25 & 3/4 & 3/4 & 3/4 & 3/4 & 17/25 & \{23,45,2345\}   \\
 1 & 1 & 1 & 1 & 39/100 & 3/4 & 3/4 & 3/4 & 3/4 & 3/4 & \{23,2345\}   \\
 1 & 1 & 1 & 1 & 16/25 & 3/4 & 1 & 3/4 & 1 & 1 & \{23,234\}   \\
 1 & 1 & 1 & 49/50 & 16/25 & 73/100 & 1 & 73/100 & 1 & 1 & \{15,23,234\}   \\
 1 & 1 & 1 & 49/50 & 16/25 & 16/25 & 1 & 3/100 & 1 & 1 & \{15,34,234\}  \bullet   \\
 1 & 1 & 1 & 49/50 & 16/25 & 16/25 & 1 & 16/25 & 1 & 1 & \{15,234\}   \\
 1 & 1 & 1 & 49/50 & 4/5 & 16/25 & 1 & 4/5 & 1 & 1 & \{15,24,234\}   \\
 1 & 1 & 1 & 49/50 & 89/100 & 73/100 & 1 & 89/100 & 1 & 1 & \{15,24,234\}   \\
 1 & 1 & 1 & 49/50 & 49/50 & 41/50 & 1 & 49/50 & 1 & 1 & \{15,24,234\}   \\
 1 & 1 & 1 & 1 & 1 & 21/25 & 1 & 1 & 1 & 1 & \{24\}   \\
 21/25 & 1 & 21/25 & 1 & 1 & 21/25 & 1 & 1 & 21/25 & 1 & \{35,124\}   \\
 77/100 & 1 & 21/25 & 1 & 1 & 21/25 & 1 & 1 & 77/100 & 1 & \{12,35,124\}  
\medskip \\
 1 & 1 & 1 & 1 & 91/100 & 3/4 & 91/100 & 91/100 & 91/100 & 91/100 & \{24,2345\}   \\
 1 & 1 & 1 & 1 & 91/100 & 3/4 & 1 & 91/100 & 1 & 1 & \{24,234\}   \\
 1 & 1 & 1 & 1 & 3/4 & 3/4 & 1 & 3/4 & 1 & 1 & \{234\}   \\
 1 & 1 & 1 & 49/50 & 73/100 & 73/100 & 1 & 73/100 & 1 & 1 & \{15,234\}    
\end{matrix}.
$$
}
\end{center}
\caption{\label{table:23ultrametrics}
The $23$ ultrametrics (with tree topologies)
at the nodes in Fig.~\ref{fig:threebyten}.}
\end{table}

Each of the $23$ nodes in our tropical triangle represents an equidistant tree.
In Table \ref{table:23ultrametrics} we list the $23$ ultrametrics,
along with their tree topologies.
Those marked with a bullet $\bullet$ are the rows of $\mathcal{D}$.
The boundary of ${\rm tconv}(\mathcal{D})$
 is given by the first $19$ rows, in counterclockwise order. 
Rows $1$ to $6$ form  the tropical segment from  $D^{(1)}$ to $D^{(2)}$,
rows $7$ to $12$ form  the tropical segment from  $D^{(2)}$ to $D^{(3)}$,
and rows $13$ to $19$ form  the tropical segment from  $D^{(3)}$ to $D^{(1)}$.
The last four rows are the interior nodes, from top to bottom
in Fig.~\ref{fig:threebyten}.
The tropical segment from $D^{(2)}$ to $D^{(3)}$ has depth $1$, but
the other two segments have depth $2$.
See Section~\ref{sec5} for the definition of ``depth''.
Note that some of the breakpoints, such as 
that given in row $6$, lie in the interior of a maximal cone in
the tree space $\,\mathcal{U}_5 = {\rm Trop}(L_5)$.  The red circle in Fig.~\ref{fig:threebyten} is the {\em tropical centroid},
a concept to  be introduced in Section \ref{sec6}.
\end{example}

The polyhedral geometry package  {\tt Polymake} \cite{polymake}
 can compute the tropical  convex hull of a finite set of points.
 It can also visualize such a tropical polytope, with its cell complex structure,
  provided its dimension is 
 $2$ or $3$.  Fig.~\ref{fig:threebyten}    was drawn using {\tt Polymake}.

 Matroid theory furnishes
the appropriate level of generality for tropical convexity.
We refer to the text book reference  \cite[\S 4.2]{MS}
for an introduction. Let $M$ be any matroid of rank $r$ on 
 the ground set $[e] = \{1,2,\ldots,e\}$.
The associated {\em tropical linear space} ${\rm Trop}(M)$ is 
a polyhedral fan of dimension $r-1$ in the quotient space $\RR^e / \RR {\bf 1}$.
For historical reasons, the tropical linear space ${\rm Trop}(M)$
is also known as the {\em Bergman fan} of the matroid $M$; see \cite{Ard,AK}.
The tree space $\mathcal{U}_m$ arises when
$M$ is the graphic matroid \cite[Example 4.2.14]{MS}
associated with the complete graph $K_m$.
Here $[e]$ indexes the  edges of $K_m$, so
$e = \binom{m}{2}$ and the rank is
$r = m-1$.  Fig.~\ref{fig:eins} (left)
shows  a rendition of  the Petersen graph in
\cite[Figure 4.1.4]{MS}.

The tropical linear space ${\rm Trop}(M)$ is tropically convex
\cite[Proposition 5.2.8]{MS}. Hence the notions of
tropical segments, tropical triangles, etc.~defined above extend immediately to
${\rm Trop}(M)$.  This convexity structure on ${\rm Trop}(M)$
is extrinsic and global. It is induced from the ambient space
$\RR^e / \RR {\bf 1}$, so it does not rely on
choosing a subdivision or local coordinates.

We can also define the structure of an orthant space
on ${\rm Trop}(M)$. This requires the choice of
a simplicial fan structure on ${\rm Trop}(M)$.
 Feichtner \cite{Fei} developed a theory of such fan.
Each is determined by a
collection of $n$ flats of $M$, known as a  {\em building set}.
From these one constructs  a simplicial complex
$\Omega$ on $[n]$,
known as a  {\em nested set complex}.
This $\Omega$ may or may not be flag.
The finest fan structure  in this theory
arise when all flats of $M$ are in the collection.
Here the nested set complex $\Omega$ is flag: it is
the order complex of the geometric lattice of $M$.
 See  Exercise 10 in \cite[Chapter 4]{MS}.

\begin{example} \label{ex:uniform} \rm
Let $M$ be the uniform matroid of rank $r$ on $[e]$.
The tropical linear space ${\rm Trop}(M)$ is the set of all
vectors $u$ in   $\RR^e/\RR {\bf 1}$  whose largest coordinate is
attained at least $e-r+1$ times. The proper flats of $M$
are the non-empty subsets of $[e]$ having cardinality at most $r-1$.
Their number is $n = \sum_{i=1}^{r-1} \binom{e}{i}$.
Ordered by inclusion, these form the geometric lattice of $M$.
Its order complex is the first barycentric subdivision
of the $(r-1)$-skeleton of the $(e-1)$-simplex.
This is a flag simplicial complex $\Omega$ with $n$
vertices. The corresponding orthant space
$\mathcal{F}_\Omega$ in $\RR^n$
defines the structure of an orthant space on
${\rm Trop}(M) \subset \RR^e /\RR {\bf 1}$.
\end{example}

While the order complex of the geometric lattice of any matroid $M$
makes {\small ${\rm Trop}(M)$} into an orthant space,
many matroids have smaller building sets
whose nested set complex is flag as well.
Our primary example is ultrametric space 
$\mathcal{U}_m = {\rm Trop}(L_m)$.
The flats of $L_m$ correspond to proper set partitions
of $[m]$. Their number is $n = B_m{-}2$, where 
$B_m$ is the {\em Bell number}. The resulting
orthant space on $\mathcal{U}_m$ is
the {\em $\tau$-space} of Gavruskin and Drummond~\cite{GD}.
The subdivision of $\mathcal{U}_m$ given by the nested sets of clades
is much coarser. It has only $n = 2^m{-}m{-}2$ rays,
and its orthant space  gives the BHV metric.
This  is different from the $\tau$-metric, by \cite[Proposition 2]{GD}. Note that
\cite[Figure 4]{GD} is the same as our Example~\ref{ex:differentomega}.

Each orthant space structure defines a Euclidean metric on ${\rm Trop}(M)$.
These metrics differ dramatically from the tropical metric, to be defined next,
in (\ref{eq:tropmetric}). Euclidean metrics
on ${\rm Trop}(M)$  are {\bf intrinsic} and
do not extend to the ambient space 
$\RR^e/\RR {\bf 1}$. Distances are computed
by identifying ${\rm Trop}(M)$ with the orthant space
$\mathcal{F}_\Omega$ of a 
 nested set complex $\Omega$ that is flag.
 On the other hand, the tropical metric is {\bf extrinsic}.
 It lives on $\RR^e/\RR {\bf 1}$ and is defined on
$ {\rm Trop}(M)$ by restriction.
The tropical distance between two points~is computed as follows:
\begin{equation}
\label{eq:tropmetric} d_{\rm tr}(v,w) \,\, = \,\,
\max \bigl\{\, |v_i - w_i  - v_j + w_j| \,\,:\,\, 1 \leq i < j \leq e \,\bigr\} . 
\end{equation}
This is also known as the
{\em generalized Hilbert projective metric} 
\cite[\S 2.2]{AGNS}, \cite[\S 3.3]{CGQ}.
Unlike in the Euclidean setting of
Theorem \ref{thm:owenprovan},
geodesics in that metric are  not unique.

\begin{proposition}
For any two distinct points $v,w \in \RR^e/\RR {\bf 1}$,
there are many geodesics between $v$ and $w$
in the tropical metric. One of them is the tropical line segment
${\rm tconv}(v,w)$.
\end{proposition}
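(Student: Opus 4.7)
The plan is to prove two separate statements. First, that the tropical segment $\mathrm{tconv}(v,w)$ has tropical length exactly $d_{\mathrm{tr}}(v,w)$, making it a geodesic. Second, that other geodesics exist; the cleanest witness is the ordinary Euclidean straight segment from $v$ to $w$ inside $\RR^e/\RR\mathbf{1}$, which I will show has the same length.

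For the first part, I would parametrize the tropical segment by $p(\lambda) = (v - \lambda \mathbf{1}) \oplus w$ for $\lambda \in [\min_i \alpha_i,\, \max_i \alpha_i]$, where $\alpha_i := v_i - w_i$. One checks that $p(\min_i \alpha_i) \equiv v$ and $p(\max_i \alpha_i) = w$ modulo $\RR\mathbf{1}$, and that after sorting $\alpha_{i_1} \leq \alpha_{i_2} \leq \cdots \leq \alpha_{i_e}$ the breakpoints occur exactly at $\lambda = \alpha_{i_k}$. On each interval $[\alpha_{i_k}, \alpha_{i_{k+1}}]$ the coordinates of $p(\lambda)$ split into a set that equals $w_i$ (constant) and a set that equals $v_i - \lambda$ (decreasing linearly). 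A short direct computation then gives
\[
d_{\mathrm{tr}}\bigl(p(\lambda), p(\lambda')\bigr) \,=\, |\lambda - \lambda'|
\quad\text{for } \lambda,\lambda' \in [\alpha_{i_k}, \alpha_{i_{k+1}}],
\]
so the total length of the tropical segment is $\max_i \alpha_i - \min_i \alpha_i$, which equals $d_{\mathrm{tr}}(v,w)$ by the formula~\eqref{eq:tropmetric}. Since any path from $v$ to $w$ has length at least $d_{\mathrm{tr}}(v,w)$, the tropical segment is a geodesic.

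For the second part, consider the Euclidean straight segment $q(t) = (1-t)v + tw$ for $t \in [0,1]$. For $0 \le t \le t' \le 1$ one has $q_i(t') - q_i(t) = (t'-t)(w_i - v_i)$, so
\[
d_{\mathrm{tr}}(q(t), q(t')) \,=\, (t'-t)\bigl(\max_i(v_i - w_i) - \min_i(v_i - w_i)\bigr) \,=\, (t'-t)\, d_{\mathrm{tr}}(v,w),
\]
and therefore this Euclidean segment also has total length $d_{\mathrm{tr}}(v,w)$. A concrete example such as $v=(0,0,0),\, w=(1,2,0)$ in $\RR^3/\RR\mathbf{1}$ (where the tropical segment passes through $(0,1,0)$ while the Euclidean segment passes through $(0,\tfrac12,-\tfrac12)$ after normalization) shows the two geodesics are genuinely distinct, so geodesics are not unique. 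In fact, one obtains an uncountable family of geodesics by noting that any continuous path $\gamma:[0,1]\to \RR^e/\RR\mathbf{1}$ for which every coordinate difference $\gamma_i - \gamma_j$ moves monotonically from $v_i - v_j$ to $w_i - w_j$ is a geodesic, by the same additivity argument used above.

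The only subtle point is the standard fact that the path length (supremum over partitions) of a piecewise affine curve in the tropical metric equals the sum of the tropical distances between consecutive breakpoints, which reduces to verifying that $d_{\mathrm{tr}}$ is additive along each linear piece; this follows because on each piece the coordinate that attains the maximum of $\gamma_i(t) - \gamma_j(t)$ and the coordinate attaining the minimum can be chosen independently of $t$. With this in hand, both assertions of the proposition follow with no further work.
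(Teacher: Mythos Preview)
Your argument is correct. The paper's proof is a single sentence: it asserts that if $u$ has coordinates lying between those of $v$ and $w$ then $d_{\rm tr}(v,w)=d_{\rm tr}(v,u)+d_{\rm tr}(u,w)$, so every coordinatewise-monotone path from $v$ to $w$ is a geodesic, the tropical segment being one such path. Your route is different and more explicit: you parametrize the tropical segment and the ordinary Euclidean segment separately, compute the tropical length of each directly, and exhibit a concrete pair where the two geodesics differ.

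What your approach buys is rigor at a modest cost in length. The paper's additivity claim, read literally (each $u_i$ between $v_i$ and $w_i$ for some fixed representatives), is not correct: with $v=(3,3,3)$, $w=(3,1,2)$, $u=(3,1,3)$ one has $u$ coordinatewise between $v$ and $w$, yet $d_{\rm tr}(v,u)+d_{\rm tr}(u,w)=2+1=3>2=d_{\rm tr}(v,w)$. The hypothesis that actually forces additivity is precisely the one you isolate in your final paragraph---that each pairwise difference $\gamma_i-\gamma_j$ moves monotonically from $v_i-v_j$ to $w_i-w_j$---since this pins down the indices realizing the maximum and minimum in \eqref{eq:tropmetric} along the entire path. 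Your direct length computations for $p(\lambda)$ and $q(t)$ sidestep this subtlety, and your closing remark then supplies the correct general principle that the paper's sketch was reaching for.
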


\begin{proof}
If $u$ is any point whose coordinates lie between
those of $v$ and $w$, then $d_{\rm tr}(v,w) = d_{\rm tr}(v,u) + d_{\rm tr}(u,w)$.
Hence, any path from $v$ to $w$
that is monotone in each coordinate is a geodesic.
One such path is the tropical segment ${\rm tconv}(v,w)$
in the max-plus arithmetic.
\end{proof}

One important link between the tropical metric
and tropical convexity is the nearest-point map,
to be described next. Let $\mathcal{P}$ be any 
tropically convex closed subset of $\RR^e/\RR {\bf 1}$,
and let $u$ be any vector in $\RR^e $.
Let $\mathcal{P}_{\leq u}$ denote the subset of
all points in $\mathcal{P}$ that have a representative
$v \in \RR^e$ with $v \leq u$ in the coordinate-wise order.
In tropical arithmetic this is expressed as $v \oplus u = u$.
If $v$ and $v'$ are elements of $\mathcal{P}_{\leq u}$
then so is their tropical sum $v \oplus v'$.
It follows that $\mathcal{P}_{\leq u}$ contains a unique
coordinate-wise maximal element, denoted
${\rm max}(\mathcal{P}_{\leq u})$.

\begin{theorem} Given any tropically convex closed subset $\mathcal{P}$
of $\,\RR^e/ \RR {\bf 1}$, consider the function
\begin{equation}
\label{eq:nearestpoint}
 \pi_\mathcal{P} \,:\, \RR^e/ \RR {\bf 1}
\rightarrow \mathcal{P}\,, \,\,
u \mapsto {\rm max}(\mathcal{P}_{\leq u}) . 
\end{equation}
Then $\pi_\mathcal{P}(u)$ is the unique point in $\mathcal{P}$
that minimizes the tropical distance to $u$.
\end{theorem}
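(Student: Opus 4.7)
The argument proceeds in three stages, corresponding to well-definedness of $\pi := \pi_{\mathcal{P}}(u)$, the minimization property, and uniqueness.

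First, I would verify that $\pi$ is well-defined as a point of $\mathcal{P}$. Lift to representatives in $\RR^e$ and consider
\[
\widetilde{\mathcal{P}}_{\leq u} \,:=\, \bigl\{\, v \in \RR^e \,:\, [v] \in \mathcal{P},\ v \leq u \text{ componentwise}\,\bigr\}.
\]
This set is non-empty, because any representative of any point of $\mathcal{P}$ can be translated by $-\lambda\mathbf{1}$ for sufficiently large $\lambda$ to satisfy $v \leq u$, and it is bounded above by $u$. Tropical convexity of $\mathcal{P}$ makes $\widetilde{\mathcal{P}}_{\leq u}$ closed under coordinatewise maxima (tropical sums), and closedness of $\mathcal{P}$ in $\RR^e/\RR\mathbf{1}$ implies that the supremum $\pi := \sup \widetilde{\mathcal{P}}_{\leq u}$ exists in $\RR^e$ and represents a point of $\mathcal{P}$, obtained as a limit of finite tropical sums of elements of $\widetilde{\mathcal{P}}_{\leq u}$.

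Second, I would rewrite the tropical distance, using translation invariance, as
\[
d_{\rm tr}(u,w) \,=\, \max_i (u_i-w_i) - \min_i (u_i-w_i).
\]
For each $w\in\mathcal{P}$, fix the \emph{tight lift}: the unique representative $\tilde{w}\in\RR^e$ with $\tilde{w}\leq u$ and $\min_i(u_i-\tilde{w}_i)=0$, obtained by shifting $w$ upward until it first touches $u$ in some coordinate. Then $d_{\rm tr}(u,w)=\max_i(u_i-\tilde{w}_i)$. Since $\tilde{w}\in\widetilde{\mathcal{P}}_{\leq u}$, we have $\tilde{w}\leq\pi$ coordinatewise, so $\min_i(u_i-\pi_i)\leq\min_i(u_i-\tilde{w}_i)=0$; combined with $\pi\leq u$ this forces $\min_i(u_i-\pi_i)=0$. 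Hence
\[
d_{\rm tr}(u,\pi) \,=\, \max_i(u_i-\pi_i) \,\leq\, \max_i(u_i-\tilde{w}_i) \,=\, d_{\rm tr}(u,w),
\]
which establishes that $\pi$ is a minimizer.

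Third, the uniqueness claim is the main obstacle, and the step requiring the most care. Suppose $w\in\mathcal{P}$ is another minimizer; then $\tilde{w}\leq\pi$ with $\min_i(u_i-\tilde{w}_i)=\min_i(u_i-\pi_i)=0$ and $\max_i(u_i-\tilde{w}_i)=\max_i(u_i-\pi_i)$. The normalization by itself does not force $\tilde{w}=\pi$, so the argument must exploit tropical convexity more finely: if $\tilde{w}_k<\pi_k$ in some coordinate $k$, the supremum characterization of $\pi$ produces an element of $\widetilde{\mathcal{P}}_{\leq u}$ that strictly exceeds $\tilde{w}_k$ there, and combining it tropically with $\tilde{w}$ gives a point of $\mathcal{P}_{\leq u}$ lying strictly above $\tilde{w}$; one then argues that no such $w$ can simultaneously be a minimizer. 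The cleanest interpretation of the theorem is that $\pi_{\mathcal{P}}(u)$ is the distinguished (coordinatewise maximal) minimizer, intrinsically characterized by the supremum property from the first stage; this is the sense of uniqueness that is standard in tropical convex analysis and that I expect the proof to deliver.
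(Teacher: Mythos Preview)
The paper does not supply its own proof of this theorem; immediately after the statement it writes ``This result was proved by Cohen, Gaubert and Quadrat in \cite[Theorem 18]{CGQ}. See also \cite{AGNS}.'' So there is nothing in the paper to compare your argument against line by line.

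That said, your first two stages are correct and are exactly the standard argument: the tight-lift normalization $\min_i(u_i-\tilde w_i)=0$ reduces $d_{\rm tr}(u,w)$ to $\max_i(u_i-\tilde w_i)$, and the supremum property of $\pi$ then gives $d_{\rm tr}(u,\pi)\le d_{\rm tr}(u,w)$ immediately.

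Your unease in the third stage is well founded, and the contradiction you try to set up there cannot be completed. Uniqueness of the minimizer, read literally, is false. Take $e=3$, let $\mathcal{P}=\{\,[(0,0,t)]:t\in\RR\,\}\subset\RR^3/\RR\mathbf{1}$, which is closed and tropically convex, and let $u=(1,0,0)$. For $-1\le t\le 0$ one has $u-(0,0,t)=(1,0,-t)$ with $\max=1$ and $\min=0$, so $d_{\rm tr}(u,(0,0,t))=1$ throughout that interval; the set of nearest points is a whole segment, while $\pi_{\mathcal P}(u)=(0,0,0)$ is only one of them. What the cited references actually prove is precisely the interpretation you arrive at in your last paragraph: $\pi_{\mathcal P}(u)$ is the distinguished, coordinatewise-maximal nearest point, and it is \emph{that} point which is uniquely determined. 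Your instinct to read the theorem that way is correct; the sketch you give for literal uniqueness should be dropped, since no such argument can succeed.
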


This result was proved by Cohen, Gaubert and Quadrat
in \cite[Theorem 18]{CGQ}. See also \cite{AGNS}. 
In Section \ref{sec6} we shall discuss the important case when $\mathcal{P} = {\rm Trop}(M)$
is a tropical linear space. The subcase when $\mathcal{P}$ is a tropical hyperplane
appears in \cite[\S 7]{AGNS}.

We close this section by considering a tropical polytope,
{\small ${\rm tconv}(D^{(1)},D^{(2)},\ldots,D^{(s)})$.}
The $D^{(i)}$ are points in $\RR^e/\RR {\bf 1}$. For instance,
they might be ultrametrics in $ \mathcal{U}_m$.  Then
$$ \pi_\mathcal{P} (D) \,= \,
\lambda_1 \odot  D^{(1)} \,\oplus \,
\lambda_2 \odot  D^{(2)} \,\oplus \, \cdots \,\oplus \,
\lambda_s \odot  D^{(s)}  ,
\quad {\rm where} \,\, \lambda_k = {\rm min}(D-D^{(k)}) .
$$
This formula appears in \cite[(5.2.3)]{MS}. It allows us to easily
project an ultrametric $D$ (or any other point in $\RR^e$) onto the
tropical convex hull of $s$ given ultrametrics.

\section{Experiments with Depth}
\label{sec5}

It is natural to compare tropical convexity with geodesic convexity. 
One starts by comparing the line segments ${\rm gconv}(v,w)$ and ${\rm tconv}(v,w)$,
where $v,w $ are points in a tropical linear space ${\rm Trop}(M) $.
Our first observation is that tropical segments generally do not
obey the combinatorial structure imposed by ordered partitions 
$(\mathcal{A},\mathcal{B})$. For instance, if $v$ and $w$ represent equidistant trees 
then every clade used by a tree in 
the geodesic segment ${\rm gconv}(v,w)$ must be a clade of $v$ or $w$.
This need not hold for trees in the tropical segment ${\rm tconv}(v,w)$.

\begin{example} \rm 
Let $v = D^{(1)}$ and $w = D^{(2)}$  in Example  \ref{ex:TropTriangle}.
Their tree topologies are given by the sets of clades
$\{12,35,124\}$ and $\{23,45, 2345\}$.
Consider the breakpoints of the tropical segment ${\rm tconv}(v,w)$ 
given in lines 3 and 4 of Table \ref{table:23ultrametrics}.
Both trees have the new clade $24$.
\end{example}

This example suggests that tropical segments might be worse
than Euclidean geodesics. However, as we shall now argue,
the opposite is the case: for us, tropical segments are better.
 We propose the following
quality measure for a path $P$ in an orthant space $\mathcal{F}_\Omega$.
Suppose that $\mathcal{F}_\Omega$ has
 dimension $d$. Each point of $P$ lies in the relative interior of a unique
orthant $\mathcal{O}_\sigma$, and we say that this point has {\em codimension}
$d-{\rm dim}(\mathcal{O}_\sigma)$. We define the {\em depth}
of a path $P$ as the maximal codimension of any point in $P$. For 
instance, the depth of the Euclidean geodesic in Theorem \ref{thm:owenprovan}
is the maximum of the numbers
$\sum\limits_{k=1}^i |A_k| -  \sum\limits_{k=1}^{i-1} |B_k|$,
where $i$ runs over $\{1,2,\ldots,q\}$.
These are the codimensions of the breakpoints of ${\rm gconv}(v,w)$.

  Geodesics of small depth are desirable.
A  cone path has depth~$d$.
Cone paths are bad from a statistical perspective
because they give rise to {\em sticky means}, see e.g.~\cite{openbook},
\cite{Mil} or \cite[\S 5.3]{MOP}.
Optimal geodesics have depth $0$. 
Such geodesics are line segments within a single orthant.
These occur  if and only if the starting point and target point are in the same orthant. 
If the two given points are not in the same orthant then
the best-case scenario is depth $1$, which means that
each transition is through an orthant of codimension~$1$ 
in $\mathcal{F}_\Omega$.
We conducted two experiments, to assess the
depths of ${\rm gconv}(v,w)$ and ${\rm tconv}(v,w)$.

\begin{experiment}[Euclidean Geodesics] \label{exp:eins} \rm
For each $m \in \{4,5,\ldots,20\}$,
we sampled $1000$ pairs 
$\{v,w\}$ from the compact tree space $\mathcal{U}_m^{[1]}$,
and for each pair we computed the depth of ${\rm gconv}(v,w)$.
  The sampling scheme is described below. The depths are integers between $0$ and $m-2$.
\begin{algorithm2}[Sampling normalized equidistant
  trees with $m$ leaves]\label{al1} \label{alg:generating} 
\\
{\bf Input}:  The number $m$ of leaves, and the sample size $s$. \\
 {\bf Output}: A sample of $s$ random equidistant trees
in the compact tree space $\,\mathcal{U}_m^{[1]}$.
\begin{enumerate} 
\item Set $\mathcal{S} = \emptyset$. 
\item For $i = 1, \ldots s$, do  
\begin{enumerate}
\item Generate a tree $D_i$ using the function {\tt rcoal} from
the {\tt  ape} package {\rm \cite{APE}} in {\tt R}. 
\item Randomly permute the leaf labels on the metric tree $D_i$.  
\item Change the clade nested structure of $D_i$ by randomly applying the nearest
  neighbor interchange (NNI) operation $m$ times.  
\item  $\!\!$Turn $D_i$ into an equidistant tree using the {\tt ape} function {\tt
    compute.brtime}.
    \item Normalize $U_i$ so that
the distance from the root to each leaf is $\frac{1}{2}$.  
\item Add $D_i$ to the output set $\mathcal{S}$.  
\end{enumerate}  
\item Return $\mathcal{S}$.
\end{enumerate}
\end{algorithm2}

Table \ref{table4} shows the distribution of the depths.
For instance, the first row concerns $1000$ random
geodesics on the $2$-dimensional polyhedral fan $\mathcal{U}_4$ depicted in Fig.~\ref{fig:eins}.
 Of these geodesics, $8.4\%$ were in a single orthant,
$58.4\%$ had depth $1$, and $33.2 \%$ were cone paths.
For $m=20$, the fraction of cone paths was $6.2 \%$.
The data in Table~\ref{table4} are based on the
sampling scheme in Algorithm \ref{al1}. 
\setlength{\tabcolsep}{4.2pt}
\begin{table}[h!]
\begin{center}
\tiny
\begin{tabular}
{|c|c|c|c|c|c|c|c|c|c|c|c|c|c|c|c|c|c|c|c|}
\hline
$\!\!m \backslash$depth \!\!\!\!\!&0&1&2&3&4&5&6&7&8&9&10&11&12&13&14&15&16&\!\!17\!\!&\!\!18\!\!\\
\hline
4&8.4&\!\!\!\!\! 58.4 \!\!\!\!\! &\!\!\!\! 33.2 \!\!\!\! &&&&&&&&&&&&&&&&\\
\hline
5&1.6&\!\!\!\!\! 26.4 \!\!\!\!\!& \!\!\!\! 47.4 \!\!\!\!&\!\!\!\! 24.6 \!\!\!\! &&&&&&&&&&&&&&&\\
\hline
6&0.2&\!\!\!\! \!13.2 \!\!\!\!\!&\!\!\!\! 36.7 \!\!\!\! & \!\!\!\! 31.5 \!\!\!\! &\!\!\!\! 18.4 \!\!\!\! &&&&&&&&&&&&&&\\
\hline
7&0&4&\!\!\!\! \!25.9\! \!\!\!\! & \!\!\!\! 29.9 \!\!\!\! & \!\!\!\! 22.2 \!\!\!\! &18&&&&&&&&&&&&&\\
\hline
8&0&1.1&15&\!\!28.9\!\!&25&\!\!17.1\!\!&\!\!12.9\!\!&&&&&&&&&&&&\\
\hline
9&0&0.8&8&\!\!22.1\!\!&\!\!25.9\!\!&\!\!18.3\!\!&\!\!14.5\!\!&\!\!10.4\!\!&&&&&&&&&&&\\
\hline
10&0&0.4&3.3&\!\!17.2\!\!&\!\!22.3\!\!&\!\!20.6\!\!&\!\!14.1\!\!&\!\!13.2\!\!&8.9&&&&&&&&&&\\
\hline
11&0&0.2&1.5&\!\!10.4\!\!&\!\!17.6\!\!&\!\!20.3\!\!&\!\!16.8\!\!&\!\!12.8\!\!&\!\!11.1\!\!&9.3&&&&&&&&&\\
\hline
12&0&0.2&0.1&6&\!\!14.1\!\!&\!\!20.4\!\!&\!\!13.9\!\!&\!\!14.6\!\!&\!\!12.7\!\!&\!\!10.5\!\!&7.5&&&&&&&&\\
\hline
13&0&0.2&0.4&4.2&10.1&17.2&15.9&12.5&11&9.8&9.1&9.6&&&&&&&\\
\hline
14&0&0.2&0&2.7&9.3&14.9&15.5&12.2&11.3&\!\!10.4\!\!&8.7&8&6.8&&&&&&\\
\hline
15&0&0.1&0&1.4&5.9&12.7&13&13.1&11.3&9.2&8.9&8.5&7.5&8.4&&&&&\\
\hline
16&0&0&0&1&5&11.2&11.4&11.3&11.2&9.9&8.1&9.1&7.3&6.7&7.8&&&&\\
\hline
17&0&0&0&0.2&3.4&5.9&10.7&11&11.2&\!\!11.5 \!\!&8.4&7.9&7.9&6.2&8.5&7.2&&&\\
\hline
18&0&0&0.1&0.4&1.5&6.5&8.7&10.5&10.9&9.7&7.9&7.5&7.1&8.7&7.7&6.5&6.3&&\\
\hline
19&0&0&0&0.2&1.6&5&7.2&9.3&9.6&8.5&7.5&8.3&7.4&6.1&9.2&7.4&6.8&\!\!\!5.9\!\!\!&  \\
\hline
20&0&0&0&0&0.5&3&6.7&7.6&11.2&9.8&9.4&8.2&5.9&7.5&6.9&6.9&4.5&\!\!\!5.7\!\!\!&
\!\!\!\!\!6.2\!\!\!\!\! \\
\hline
\end{tabular}
\end{center}
\caption{\label{table4}
The rows are labeled by the number $m$ of taxa and the columns
are labeled by the possible depths of a geodesic in tree space.
The entries in each row sum to $100\%$. They are the
frequencies of the depths among $1000 $ geodesics,
randomly sampled using Algorithm \ref{al1}.
}
\end{table}
\end{experiment}

Next we perform our experiment with the tropical line segments ${\rm  tconv}(v,w)$.

\begin{experiment}[Tropical Segments]  \rm
For each $m \in \{4,5,\ldots,20\}$, we  revisited the
same $1000$ pairs $\{v,w\}$ 
from Experiment \ref{exp:eins}, and
we computed their tropical segments
${\rm tconv}(v,w)$.
Table \ref{table5} shows the distribution of their depths.
\begin{table}[h]
\begin{center}
{\tiny
\begin{tabular}
{|c|c|c|c|c|c|c|c|c|c|c|c|}
\hline
$ m \backslash$depth\!\!&0&1&2&3&4&5&6&7&8&9&10 \\
\hline
4 & 8.1 & 88.7 &   3.2  &  &&&&&&&\\ \hline
5 & 1.5 & 84.7 &  13.8 & 0 &&&&&&&\\ \hline
6 & 0.3 & 69.9 &  29.8  &  0 & 0 &&&&&&\\ \hline
7  &   0 & 55.7 &  44.1 & 0.2 & 0  & 0 &&&&&\\ \hline
8  &   0 & 42.8 &  56.9 & 0.2 & 0.1 & 0 & 0 &&&&\\ \hline
9  &   0 & 28.0 &  71.4 & 0.6 &  0 & 0 & 0 & 0 &  &&\\ \hline
10  & 0 & 20.7 &  78.2 & 1.1 & 0  & 0 & 0 & 0 & 0 &&\\ \hline
11  & 0 & 10.8 &  88.0 & 1.1 & 0.1 & 0 & 0 & 0 & 0 & 0 &\\ \hline
12  & 0 &  7.8 &   89.5 & 2.4 & 0.2 & 0.1 & 0 & 0 & 0 & 0 & 0 \\ \hline
13  & 0 &  5.3 &  90.8 & 3.5  & 0.3 & 0.1 & 0 & 0 & 0 & 0 & 0 \\ \hline
14  & 0 &  2.5 &  92.1 & 4.9 & 0.5 & 0 & 0 & 0 & 0 & 0 & 0 \\ \hline
15  & 0 &  1.9 &  90.4 & 6.8 & 0.9 & 0 & 0 & 0 & 0 & 0 & 0 \\ \hline
16  & 0 &  0.4 &  88.7 & 9.4 & 1.1 & 0.4 & 0 & 0 & 0 & 0 & 0 \\ \hline
17  & 0 &  0.8 &  87.5 & 9.4 & 2.3 & 0 & 0 & 0 & 0 &0 & 0 \\ \hline
18  & 0 &  0.8 &  86.1 & 9.7 & 3.1 & 0.2 & 0 & 0.1 & 0 & 0 & 0 \\ \hline
19  & 0 &  0.3 &  84.1 & 11.3 & 3.4 & 0.8 & 0.1 & 0 & 0 & 0 & 0 \\ \hline
20  & 0 &  0.1 &  78.4 & 16.5 & 3.9 & 1.0 & 0 & 0.1 & 0 & 0 & 0 \\
\hline
\end{tabular}
}
\end{center}
\caption{\label{table5}
Frequencies of the depths among $1000 $ tropical segments.
The same input data as in Table~\ref{table4} was used.
}
\end{table}
There is a dramatic difference between Tables \ref{table4} and \ref{table5}.
The depths of the Euclidean geodesics are much larger
than those of the tropical segments.
Since small depth is desirable, this suggests that
the tropical convexity structure may have good statistical properties.
\end{experiment}

$\!\!\!\!$Triangles show even more striking differences: While tropical triangles 
 ${\rm tconv}(a,b,c)$ are  $2$-dimensional,
 geodesic triangles ${\rm gconv}(a,b,c)$
can have arbitrarily high dimension, by Theorem  \ref{thm:highdim}.
In spite of these dimensional differences,
$\,  {\rm tconv}(a,b,c) \, $ is usually not contained in
$  \, {\rm gconv}(a,b,c)$. In particular, this is
the case in the following example.

\begin{example} \label{ex:bookex} \rm
Fix $e=4, r=3$ and
let $M$ be the matroid with bases
$124, 134,234$.  The tropical plane ${\rm Trop}(M)$
is defined in $\RR^4/\RR {\bf 1}$ by 
 $x_1 \oplus x_2 \oplus x_3$.
Geometrically, this is the open book with three 
pages in Example~\ref{ex:threepages}.
The following points lie in ${\rm Trop}(M)$:
$$ a = (0,1,1,2), \quad b =   (1,0,1,4) , \quad c =  (1,1,0,6) . $$
Fig.~\ref{fig:beauty} shows the two triangles
${\rm gconv}(a,b,c)$  and ${\rm tconv}(a,b,c)$ inside that open book.
See Example~\ref{ex:threepages} and
Fig.~\ref{fig:subtle} for the derivation of the geodesic triangle.
\end{example}


\begin{figure}[h!]
  \begin{center}
    \includegraphics[width=5.5cm]{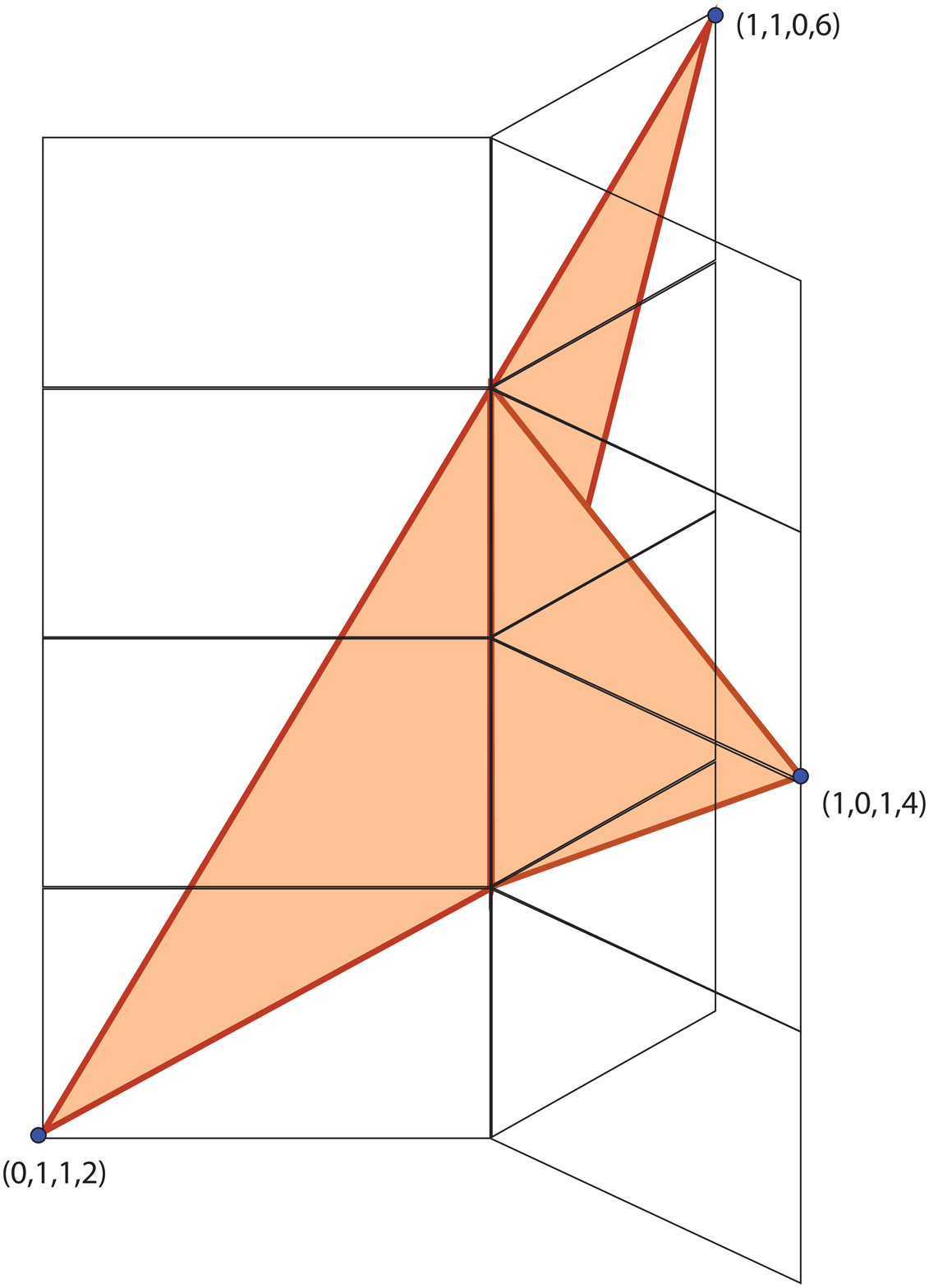} \,\,\,
        \includegraphics[width=5.5cm]{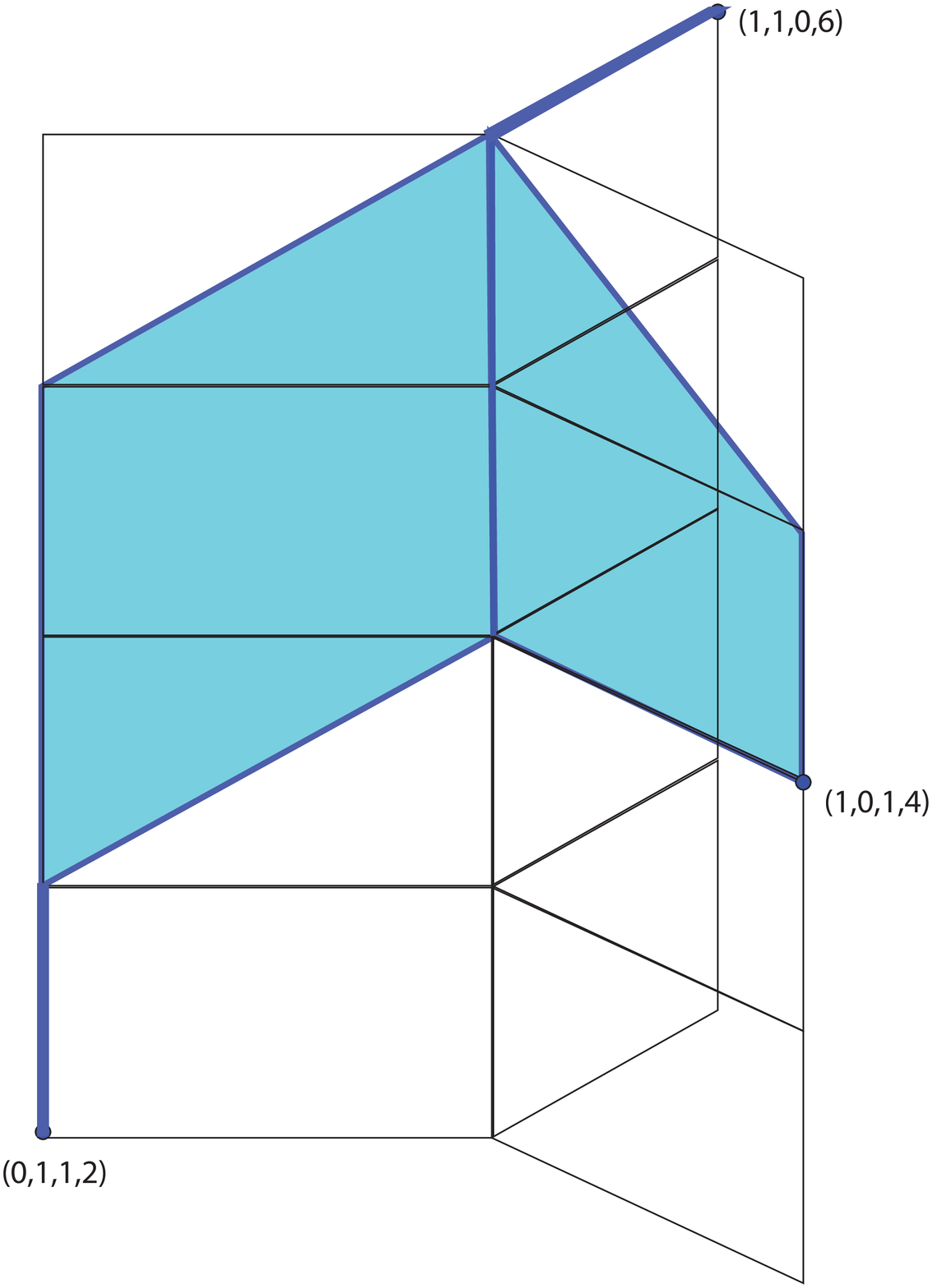}
  \end{center}
  \caption{\label{fig:beauty} 
Two convex hulls of three points $a,b,c$  inside the open book with three pages.
  The geodesic triangle is shown on the left, while the
  tropical triangle is on the right.
    } \end{figure}
  
\section{Towards Tropical Statistics}
\label{sec6}

Geometric statistics is concerned with the analysis of
data sampled from highly non-Euclidean spaces \cite{openbook, Mil}.
The section title above is meant to suggest
the possibility that objects and methods from
tropical geometry can play a role in this development.
As an illustration consider the 
widely used technique of Principal Component Analysis (PCA).
This serves to reduce the dimension of
high-dimensional data sets, by projecting these
onto lower-dimensional subspaces in the data space.
The geometry of dimension reduction is essential
in phylogenomics, where it can provide insight into relationships and
evolutionary patterns of a diversity of organisms, from humans, plants
and animals, to microbes and viruses.   

To see how tropical convexity might come in,
consider the work of Nye \cite{Nye}
in statistical phylogenetics. 
Nye developed PCA for BHV tree space.
We identify tree space with $\mathcal{U}_m$
and we  sketch the basic ideas.
Nye defined a line $L$ in $\mathcal{U}_m$
to be an unbounded path
such that every bounded subpath is a BHV geodesic.
Suppose that $L$ is such a line,
and $u \in \mathcal{U}_m \backslash L$.
Proposition 2.1 in \cite{Nye} shows that $L$ contains
a unique point $x$ that is closest to $u$
in the BHV metric. We call $x$ the {\em projection}
of $u$ onto the line $L$.
Given $u$ and $L$, we can compute $x$ as follows.
Fixing a base point $L(0)$ on the line, one choses
a geodesic parametrization  $L(t)$  of the line.
This means that $t$ is the distance $d(L(0),L(t))$.
Also let $k$ denote the distance from $u$ to $L(0)$.
By the triangle inequality, the desired point is
$x = L(t^*)$ for some $t^* \in [-k,k]$.
The distance $d(x,L(t))$ is a continuous
function of $t$. Our task is to find the value of $t^*$ which
minimizes that function on the closed interval  $[-k,k]$. This is done
 using numerical methods.
The uniqueness of $t^*$ follows from the ${\rm CAT}(0)$ property.

Suppose we are given a collection   $\{v^1, v^2,\ldots, v^s\} $ of tree metrics on $m$ taxa.
This is our data set for phylogenetic analysis. Nye's method computes a first
principal line (regression line) for these data inside BHV space. This is done
as follows. One first computes the {\em centroid} $x^0$ of the $s$ given trees.
This can be done using the iterative method in \cite[Theorem 4.1]{BHV}.
Now, the desired regression line $L$ is one of geodesics through $x^0$.
For any such line $L$, we can compute the projections 
$x^1,\ldots,x^s$ of the data points $v^1,\ldots,v^s$.
The goal is to find the line $L$ that minimizes (or, maximizes)
a certain objective function. Nye proposes two such functions:
$$ f_\perp(L):= \sum_{i=1}^s d(v^i,x^i)^2 \quad \hbox{or} \quad
 f_\para(L):= \sum_{i = 1}^s d(x^0,x^i)^2. $$
The first function of $L$ above can be minimized and the second function above can be maximized using an iterative numerical procedure.

While the paper \cite{Nye} represents a milestone concerning
statistical inference in BHV tree space, it left the open problem of
computing higher-dimensional principal components.
First, what are geodesic planes?
Which of them is the regression plane for $v^1,v^2,\ldots,v^s$?
Ideally, a plane in tree space would be a $2$-dimensional complex
that contains the geodesic triangle formed by any three of its points.
Outside a single orthant, do such planes even exist?
Such questions were raised in \cite[\S 6]{Nye}.
Our Theorem~\ref{thm:highdim} suggests that the
answer is negative.

On the other hand, tropical convexity and tropical linear algebra in $\mathcal{U}_m$
 behave better. Indeed,  each triangle ${\rm tconv}(v^1,v^2,v^3)$
 in $\RR^e \! / \RR {\bf 1}$ is $2$-dimensional and spans a tropical plane;  each  
tropical tetrahedron  ${\rm tconv}(v^1,v^2,v^3,v^4)$
is $3$-dimensional  and spans a tropical $3$-space.
These tropical linear spaces are best represented by their 
Pl\"ucker coordinates; cf.~\cite[\S 4.4]{MS}.

In what follows we take first steps towards the introduction of
tropical methods into geometric statistics.
We study  {\em tropical centroids} and  {\em projections}
onto tropical linear spaces.

In any metric space $\mathcal{M}$, one can study
 two types of ``centroids": one is the Fr\'echet mean and the
other is the Fermat-Weber point.  
Given a finite sample $\{v^1,v^2,\ldots,v^s\}$ of points in $\mathcal{M}$,
a {\em Fr\'echet mean}  minimizes the
sum of squared distances to the points.
A {\em Fermat-Weber point} $y$ minimizes the
sum of distances to the given points. 
\begin{equation}\label{fermat}
y \,:= \, \argmin_{z \in \mathcal{M}} \sum_{i = 1}^s d(z, v^i).
\end{equation}
Here we do not take the square.
We note that $y$ is generally not a unique point but refers to the set of all minimizers.
Millar et.al.~\cite{MOP} took the Fr\'echet mean
for the centroid in orthant spaces.
In the non-Euclidean context of tropical geometry, we prefer
to work with the Fermat-Weber points.
If $ v^1,\ldots, v^s \in \mathcal{M} \subset \RR^e / \RR {\bf 1}$
 then a {\em tropical centroid} is any solution $y$ to (\ref{fermat}) with  $d = d_{\rm tr}$.
In unconstrained cases, we can use the
following linear program to compute tropical centroids:

\begin{proposition}
Suppose $\mathcal{M} = \RR^e / \RR {\bf 1}$.
Then the set of tropical centroids is a convex polytope.
It consists of all optimal solutions $y = (y_1,\ldots,y_e)$
to the following linear program:
\begin{equation}
\label{eq:ourLP}
 \begin{matrix}
{\rm minimize} \quad d_1 + d_2 + \cdots + d_s \quad \hbox{\rm subject to} \qquad \qquad \qquad \\
\qquad \qquad 
y_j-y_k- v^i_j + v^i_k  \geq -d_i  \quad \hbox{for all} \,\,\, i = 1, \ldots ,s \,\,\hbox{and}\,\,\, 1\le j,k\le e,\\
\qquad \qquad 
y_j-y_k- v^i_j + v^i_k  \leq \phantom{-}d_i 
 \quad \hbox{for all} \,\,\, i = 1, \ldots ,s \,\,\hbox{and} \,\,\, 1\le j,k\le e.
\end{matrix}
\end{equation}
\end{proposition}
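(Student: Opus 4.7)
The plan is to recognize the right-hand side of (\ref{eq:ourLP}) as the standard epigraphical linearization of the Fermat--Weber objective $F(y) := \sum_{i=1}^{s} d_{\rm tr}(y, v^i)$, and then to identify the set of tropical centroids as the $y$-projection of the optimal face of this linear program.

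First I would rewrite each tropical distance using (\ref{eq:tropmetric}). Since swapping $j$ and $k$ changes the sign of $y_j - y_k - v^i_j + v^i_k$, letting $(j,k)$ range over all ordered pairs absorbs the absolute value:
\[
d_{\rm tr}(y, v^i) \;=\; \max_{1 \leq j, k \leq e}\bigl(y_j - y_k - v^i_j + v^i_k \bigr).
\]
Thus each $d_{\rm tr}(y, v^i)$ is a convex piecewise-linear function of $y$, and the two inequality blocks of (\ref{eq:ourLP}) say precisely that $d_i \geq y_j - y_k - v^i_j + v^i_k$ for every $(j,k)$, that is, $d_i \geq d_{\rm tr}(y, v^i)$.

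Next I would argue the key equivalence. For any $y$, setting $d_i := d_{\rm tr}(y, v^i)$ produces a feasible point of (\ref{eq:ourLP}) with $\sum d_i = F(y)$, so the LP optimum is $\leq \min_y F(y)$. Conversely, at any optimum $(y^*, d^*)$ of (\ref{eq:ourLP}) we must have $d_i^* = d_{\rm tr}(y^*, v^i)$ for all $i$, since otherwise we could strictly decrease some $d_i^*$ without violating a constraint, contradicting optimality. Therefore the LP optimal value equals $\min_y F(y)$, and the $y$-projection of the LP's optimal face is exactly $\argmin_y F(y)$, i.e.\ the set of tropical centroids.

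Finally I would check that this set is a convex polytope. The optimal set of any LP is a face of the feasible polyhedron, hence polyhedral, and linear projection preserves polyhedrality; so the set of optimal $y^*$ is a polyhedron. It remains to verify boundedness. In the ambient $\RR^e$ the optimal face is unbounded along $\RR \mathbf{1}$ because all data are invariant under $y \mapsto y + c\mathbf{1}$; but this direction is precisely what is quotiented out in $\mathcal{M} = \RR^e / \RR \mathbf{1}$. On any affine slice transverse to $\mathbf{1}$ the function $F$ is coercive, since $d_{\rm tr}$ descends to a genuine metric on the quotient that is bi-Lipschitz equivalent to an $\ell^\infty$-type norm; hence sublevel sets of $F$ are compact and the minimizer set is bounded. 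Combining these steps yields a convex polytope.

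The main obstacle is the coercivity / boundedness step. Piecewise-linearity and convexity of $F$ are automatic, and the LP reformulation is routine; what requires care is the passage to the quotient, since naively in $\RR^e$ the optimal set is a translate of $\RR\mathbf{1}$ and so not a polytope. The proposition's statement must be read with $y \in \RR^e/\RR\mathbf{1}$, matching the convention adopted throughout the paper for points in $\mathcal{U}_m$.
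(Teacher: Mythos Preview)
Your argument is correct. The paper itself does not supply a proof of this proposition; immediately after stating it, the authors write ``We refer to the subsequent paper \cite{LY} for details, proofs, and further results on the topic discussed here.'' So there is no in-paper proof to compare against.

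That said, your epigraphical linearization is exactly the natural route and matches what one finds in \cite{LY}: the observation that the two blocks of inequalities in (\ref{eq:ourLP}) are equivalent to $d_i \geq d_{\rm tr}(y,v^i)$, together with the standard fact that at optimum each slack $d_i$ is tight, identifies the LP with the Fermat--Weber problem. Your treatment of boundedness is also on point: the only recession direction of the objective in $\RR^e$ is $\RR\mathbf{1}$, which is precisely the line quotiented out, and on $\RR^e/\RR\mathbf{1}$ the function $d_{\rm tr}(\cdot,0)$ is a genuine norm (it equals $\max_j y_j - \min_j y_j$), so $F$ is coercive there. One small stylistic remark: rather than invoking ``bi-Lipschitz equivalent to an $\ell^\infty$-type norm,'' you can say directly that $d_{\rm tr}$ is itself a norm on the quotient, which makes the coercivity immediate.
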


We refer to the subsequent paper \cite{LY} for details, proofs, and further results on the topic
discussed here.
If $\mathcal{M}$ is a proper subset of $\RR^e/\RR {\bf 1}$
then the computation of tropical centroids is highly dependent
on the representation of $\mathcal{M}$.
For tropical linear spaces, $\mathcal{M} = {\rm Trop}(M)$,
we must solve a linear program  on each maximal cone.
The question remains how to do this efficiently.

\begin{example}\label{ex:centroid2} \rm
Consider the rows of the $3 \times 10$-matrix in Example \ref{ex:threebyten}.
We compute the tropical centroid of these three points in
 $\mathcal{M}  = \mathcal{U}_5
\subset \RR^{10}/\RR {\bf 1}$. To do this, we first compute the set of
 all tropical centroids in $\RR^{10} / \RR {\bf 1}$.
This is a $6$-dimensional classical polytope, consisting of all
optimal solutions to (\ref{eq:ourLP}). The intersection of that polytope
with tree space $\mathcal{U}_5$ equals the parallelogram
$$
 \begin{matrix}
\bigl\{
\left(1, 1, 1, 1,  \frac{61}{100} {+} y, 
\frac{61}{100}{+}x{+}y, \frac{3}{4}+y,
\frac{61}{100}{+}y, \frac{3}{4}{+}y, \frac{3}{4}{+}y\right)
:  0 \leq x \leq \frac{43}{100},\, 0 \leq y \leq \frac{7}{50} \bigr\}.
\end{matrix}
$$
This is mapped to a single (red) point inside the tropical triangle
in Fig.~\ref{fig:threebyten}.
\end{example}

Example \ref{ex:centroid2} shows that tropical centroids
of a finite set of  points generally do not lie in the tropical convex hull
of those points. For instance, the tropical centroid of $\{D^{(1)},D^{(2)},D^{(3)}\}$
that is obtained by setting $x=y=0$ in the parallelogram above
does not lie in ${\rm tconv}(D^{(1)},D^{(2)}, D^{(3)} )$.

We now come to our second and last topic in this section, namely
projecting onto subspaces.
Let $L_{\bf w}$ be a {\em tropical linear space} of dimension $r-1$
in $\RR^e/\RR {\bf 1}$. This concept is to be understood in the inclusive sense
of \cite[Definition 4.4.3]{MS}. The notation $L_{\bf w}$ also comes from \cite{MS}.
Hence ${\bf w} = (w_\sigma)$ is a vector in $\RR^{\binom{e}{r}}$ that lies in the 
{\em Dressian}  $\,{\rm Dr}(r,e)$, as in  \cite[Definition 4.4.1]{MS}. The 
Pl\"ucker coordinates $w_\sigma$  are indexed by subsets $\sigma \in \binom{[e]}{r}$.
Among the $L_{\bf w}$ are the tropicalized linear spaces
\cite[Theorem 4.3.17]{MS}. Even more special are 
linear spaces spanned by $r$  points; cf.~\cite{FR}. If $L_{\bf w}$ is spanned by
  $ x^1,  \ldots, x^r$ in $\RR^e/\RR {\bf 1}$ then its Pl\"ucker coordinate
$w_\sigma$  is the
{\em tropical determinant} of the  $r \times r$-submatrix  indexed by $\sigma$ of the
$r \times e$-matrix  $X = (x^1,\ldots,x^e)$. Note that
all tropical linear spaces $L_{\bf w}$ are tropically convex.

We are interested in the nearest point map
$\pi_{L_{\bf w}}$ that takes a point $u$ to 
the largest point in $L_{\bf w}$ dominated by $u$, as 
seen   in (\ref{eq:nearestpoint}).
From \cite[Theorem 15]{JSY} we have:

\begin{theorem}[The Blue Rule]
The $i$-th coordinate of the point in $L_{\bf w}$ nearest to $u$ is equal~to
\begin{equation}
\label{eq:bluerule}
 \quad \pi_{L_{\bf w}}(u)_i \,\, = \,\,
 {\rm max}_\tau \,{\rm min}_{j \not\in \tau} \bigl( u_j + w_{\tau \cup i} - w_{\tau \cup j} \bigr)
 \qquad {\rm for} \,\,\, i = 1,2,\ldots, e. 
 \end{equation}
Here $\tau$ runs over all $(r-1)$-subsets of $[e]$ that do not contain $i$.
\end{theorem}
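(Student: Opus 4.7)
The plan is to unify the three conditions characterizing the nearest-point projection---namely, membership in $L_{\bf w}$, domination by $u$, and maximality among such dominated points---through the cocircuit representation of tropical linear spaces. For each $(r-1)$-subset $\tau \subset [e]$, introduce the scalar
\[
\lambda_\tau \,\,:=\,\, \min_{j \notin \tau}\bigl(u_j - w_{\tau \cup j}\bigr).
\]
The Blue Rule expression then rewrites as $v_i = \max_{\tau :\, i \notin \tau}\bigl(\lambda_\tau + w_{\tau \cup i}\bigr)$, which exhibits $v$ as an explicit tropical max-plus combination of the cocircuits $C^\tau$ of $L_{\bf w}$, whose $j$-th coordinate is $C^\tau_j = w_{\tau \cup j}$ for $j \notin \tau$.

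The inequality $v \leq u$ is immediate: in the inner minimum defining $v_i$, the index $j = i$ is permitted since $i \notin \tau$, and the corresponding term collapses to $u_i$; hence each min is bounded above by $u_i$, and so is their max over $\tau$. Membership $v \in L_{\bf w}$ then follows from the standard fact that a tropical linear space is the tropical convex hull of its cocircuit vectors, applied to the combination just exhibited.

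For maximality, suppose $v' \in L_{\bf w}$ satisfies $v' \leq u$. By the cocircuit span property there exist scalars $\mu_\tau \in \RR \cup \{-\infty\}$ with $v'_j = \max_{\tau :\, j \notin \tau}\bigl(\mu_\tau + w_{\tau \cup j}\bigr)$ for every $j \in [e]$. The constraint $v' \leq u$ forces $\mu_\tau + w_{\tau \cup j} \leq u_j$ for every $\tau$ and every $j \notin \tau$, so $\mu_\tau \leq u_j - w_{\tau \cup j}$ for all such $j$, whence $\mu_\tau \leq \lambda_\tau$. Substituting into the cocircuit expression for $v'_i$ yields $v'_i \leq \max_\tau\bigl(\lambda_\tau + w_{\tau \cup i}\bigr) = v_i$, confirming that $v$ dominates every candidate and completing the identification $\pi_{L_{\bf w}}(u) = v$.

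The main obstacle is the invocation of the cocircuit span property: the claim that \emph{every} point of $L_{\bf w}$ is a tropical combination of the cocircuits $\{C^\tau\}$, with coefficients in $\RR \cup \{-\infty\}$. This is the tropical analogue of the classical fact that a linear space is spanned by any circuit-dual basis, and its cleanest treatment passes through valuated matroids; some care is needed for degenerate Pl\"ucker vectors in which $w_\sigma = -\infty$ on non-bases of the underlying matroid. Once that foundational statement is in place, the entire theorem reduces to the elementary order-of-operations manipulations on the tropical expressions sketched above.
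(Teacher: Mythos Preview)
The paper does not give its own proof of this theorem; it simply imports the statement from \cite[Theorem~15]{JSY} and notes that the ordinary-matroid case is due to Ardila \cite{Ard}. There is thus no in-paper argument to compare against.

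Your proposal is a correct and natural proof. Rewriting the Blue Rule as $v_i = \max_{\tau:\, i\notin\tau}\bigl(\lambda_\tau + w_{\tau\cup i}\bigr)$ exhibits $v$ as the max-plus combination $\bigoplus_\tau \lambda_\tau \odot C^\tau$ of the valuated cocircuits, and your three checks (domination $v\le u$ via $j=i$, membership $v\in L_{\bf w}$, and maximality among dominated points) go through exactly as stated. One small sharpening worth making explicit: for the membership step you only need the easy direction---that the cocircuits lie in (the tropical-projective closure of) $L_{\bf w}$, together with tropical convexity of $L_{\bf w}$---whereas it is the maximality step that genuinely requires the full equality, i.e.\ that \emph{every} point of $L_{\bf w}$ is a tropical combination of cocircuits. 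You correctly identify this as the one nontrivial external input; it is the valuated-matroid analogue of the classical cocircuit span and is standard in the tropical literature (Murota--Tamura, Speyer, Hampe), with the caveat you note about $-\infty$ entries when the underlying matroid is not uniform. Once that fact is granted, your argument is complete.
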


The special case of this theorem when $L_{\bf w}$ has the form 
${\rm Trop}(M)$, for some rank $r$ matroid $M$ on $[e]$, 
was proved by Ardila in \cite[Theorem 1]{Ard}.
Matroids correspond to the case when each tropical Pl\"ucker coordinate $w_\sigma$ is
either $0$ or $-\infty$. The application that motivated Ardila's study
was the ultrametric tree space $\mathcal{U}_m$.
Here the nearest-point map computes the largest ultrametric
dominated by a given dissimilarity map, a problem of importance
in phylogenetics. An efficient algorithm for this problem was
given by Chepoi and Fichet \cite{CF}. This was recently
revisited by Apostolico {\it et al.}~in~\cite{ACDP}.

\begin{example} \rm
Following Example \ref{ex:uniform},
let $M$ be the uniform matroid of rank $r$ on $[e]$.
Then ${\rm Trop}(M) = L_{\bf w}$ where ${\bf w}$
is the all-zero vector, i.e.~$w_\sigma = 0$ for $\sigma \in \binom{[e]}{r}$.
By (\ref{eq:bluerule}), the $i$-th coordinate of the nearest point
$ \pi_{L_{\bf w}}(u)$ equals
$\, {\rm max}_\tau \,{\rm min}_{j \not\in \tau} u_j  $.
That nearest point is obtained from $u$ by replacing the $e-r$ largest coordinates
in $u$ by the $r$-th smallest coordinate.
\end{example}

Returning to ideas for geometric statistics,
the Blue Rule may serve as a subroutine for
the numerical computation of regression planes.
Let $u^1,\ldots,u^s$ be data points in $\RR^{e}/\RR {\bf 1}$,
lying in a tropically convex 
subset $\mathcal{P}$ of interest, such as
$\mathcal{P} = \mathcal{U}_m$.
The tropical regression plane of dimension $r-1$ is a
solution to the optimization problem
\begin{equation}
\label{eq:regression}
\argmin_{L_{\bf w}} \sum_{i=1}^s  d_{\rm tr} (u^i, L_{\bf w}).
\end{equation}
Here ${\bf w}$ runs over all points in the Dressian ${\rm Dr}(r,e)$,
or in the tropical Grassmannian ${\rm Gr}(r,e)$.
One might restrict to {\em Stiefel tropical linear spaces} \cite{FR},
i.e.~those that are spanned by points.
Even the smallest case $r=2$ is of considerable interest,
as seen in the study of Nye \cite{Nye}. In his 
approach, we would first compute the tropical centroid inside
$\mathcal{P}$ of the sample $\{u^1,u^2,\ldots,u^s\}$. Fix $x^1$ to be that centroid.
Now $x^2 \in \mathcal{P}$ is the remaining decision variable, and 
we optimize over all tropical lines spanned by
$x^1$ and $x^2$ inside $\RR^e/\RR {\bf 1}$. 
Such a line is a tree with $e$ unbounded rays. 
If the ambient tropically convex set $\mathcal{P} $ is a tropical linear space, such as
our tree space $\mathcal{U}_m$, then
the regression tree $L_{\bf w}$ will always be contained inside $\mathcal{P}$.

 \bigskip 

\begin{small}

{\bf Acknowledgements.} We thank  Simon Hampe, Andrew Francis and Megan Owen 
for helpful conversations.
This project started in the summer of 2015, when all authors 
were hosted by the National Institute for Mathematical Sciences,
Daejeon, Korea. 
Ruriko Yoshida was supported by travel funds from the
 Department of Statistics in the College of Arts and Sciences at the
 University of Kentucky. Bernd Sturmfels thanks the
  US National Science Foundation (DMS-1419018)
  and the Einstein Foundation Berlin.


\end{small}

\end{document}